\newtheorem{remark}[theorem]{Remark}
\def \td{\tilde}
\def\orth{{\rm orth}}
\def\eig{{\rm eig}}
\def\span{{\rm span}}
\def\det{{\rm det}}
\def\WA{\mathscr{W}}
\def\Krylov{{\rm Krylov}}
\def\RK{{\rm RK}}
\def\cks{{\rm CK}}
\def\CK{{\rm CK}}
\def\ie{{\rm i.e.}}
\def\st{{\rm s.t.}}
\def\ARKSM{\textsf{ARKSM}}
\def\AAA{\textsf{AAA}}
\def\IRKA{\textsf{IRKA}}
\def\twosided{\textsf{two-sided}}
\title{transfer function interpolation remainder formula \\ of rational Krylov subspace methods%
 \thanks{Version of Dec 27, 2024.}}
\author{Yiding Lin\thanks{School of Economic and Mathematics, Southwestern University of Finance and Economics,  Chengdu, China ({\tt yiding.lin@gmail.com}). The work is supported by  National Natural Science Foundation of China (NSFC-11526166, NSFC-12101508).}}
\begin{document}
\bibliographystyle{siam}
\maketitle

\begin{abstract}
Rational Krylov subspace projection methods have proven to be a highly successful approach in the field of model order reduction (MOR), primarily due to the fact that  some derivatives of the approximate and original transfer functions are identical.
This is the well-known theory of moments matching. 
Nevertheless, the properties of points situated at considerable distances from the interplation nodes remain underexplored.
In this  paper, 
we present the explicit expression of the MOR error, which involves both the shifts and the Ritz values.
The superiority of our discoveries over the known moments matching theory can
be likened to the disparity between the Lagrange and Peano type remainder formulas  in Taylor's theorem.
Furthermore, two explanations are provided for the error formula with respect to the two parameters in the resolvent function.
One explanation reveals that the  MOR error is an interplation remainder, while  the other  explanation  implies that the error is also a  Gauss-Christoffel quadrature remainder.
By applying the error formula, we suggest a greedy algorithm for  the interpolatory  $H_{\infty}$ norm MOR.

%

%

\end{abstract}

\begin{keywords}
transfer function, rational Krylov subspace, Ritz value, $H_{\infty}$ norm model order reduction
\end{keywords}

\begin{AMS}
34C20, 41A05, 49K15, 49M05, 93A15, 93C05, 93C15 
\end{AMS}

\pagestyle{myheadings}
\thispagestyle{plain}



\markboth{Yiding Lin}{ Transfer function interpolation remainder formula of rational Krylov subspace methods}


\section{Introduction}
We  investigate  the model error of model order reduction (MOR)\cite{MR2516399}, which involves  rational Krylov subspace projection methods.
The initial single-input-single-output (SISO) dynamical system is  described as follows.
\begin{equation*}
\left\{
\begin{aligned}
\frac{dx(t)}{dt}&=Ax(t)+bu(t),\\
y(t)&=c^Hx(t),\\
\end{aligned}
\right.
\end{equation*}
with
$A \in \mathbb{C}^{n \times n}$ and $b,c\in \mathbb{C}^{n \times 1}$.

Using  the rational Krylov subspaces $\span(V)$ and $\span(W)$, we  obtain  the reduced system:
\begin{equation*}
\left\{
\begin{aligned}
W^HV\frac{dx(t)}{dt}&=W^HAVx(t)+W^Hbu(t),\\
y(t)&=c^HVx(t).\\
\end{aligned}
\right.
\end{equation*}
The transfer functions of these systems can be described as follows.
\begin{equation*}
\begin{aligned}
h(\mathfrak{s})&=c^H(\mathfrak{s}I-A)^{-1}b,\\
\tilde{h}(\mathfrak{s})&=c^HV(\mathfrak{s}W^HV-W^HAV)^{-1}W^Hb.\\
\end{aligned}
\end{equation*}

The model error can be measured by utilizing specific norms of
$e(\mathfrak{s}):=h(\mathfrak{s})-\tilde{h}(\mathfrak{s})$ \cite[Section  7.2.3]{MR3672144}.
The well-known moments matching result states that  $h^{(k)}(s_0)=\tilde{h}^{(k)}(s_0).$ This implies
$e(\mathfrak{s})=o((\mathfrak{s}-s_0)^{k})$ near $s_0$. This type of error can be likened to the Peano remainder in Taylor's theorem.
When we do analysis on a wide region, this Peano type remainder is not satisfied.
We are curious about the behavior of $e(\mathfrak{s})$ as $\mathfrak{s}$ moves away from $s_0$.
%
In a word, this paper aims to address the following question: Considering that $\tilde h(\mathfrak{s})$ is the interpolating function of $h(\mathfrak{s})$, who can be identified as  the interpolation remainder?
The complexity of this problem stems from  the fact that both $h(\mathfrak{s})$ and $\tilde h(\mathfrak{s})$  are rational  functions and possess special forms.
As a result, the existence of an explicit error formula was uncertain until the completion of this research.
The Krylov subspace projection method is  one of the mainstream methods  in  the field of  MOR.
There are plenty of references related to this topic. We mainly review the literature  which are closely related to the (tangential) interpolation property and  the moments matching theory.
This type methods are first set up by Skelton \textit{et al}. \cite{MR924278,MR794690,MR780321}.
Then, Grimme  
combines it with the rational Krylov subspaces \cite{grimme1997krylov}.
Gallivan, Vandendorpe and Van Dooren  make several contributions to this topic \cite{MR2013459,MR2043508,MR2124150}.
A recent book \cite{doi:10.1137/1.9781611976083} by 
Antoulas, Beattie and Gugercin summarizes the latest  theories and algorithms.
Other material can be found in \cite{MR2421462,MR2155615,MR2142591,MR3274477,MR3419868,MR2164197,MR2516399} and references therein.

In 1997, Grimme  \cite{grimme1997krylov}  observed that the model error can be expressed as
$e(\mathfrak{s})=r_c^H(\mathfrak{s}I-A)^{-1}r_b.$ This truth is used in later references (e.g.\cite{MR3473700,MR3592472,MR2501562,frangos2007adaptive,panzer2014model}).
Our work starts by expanding $r_b$ and $r_c$ in relation to the Ritz values and the shifts.
The expressions for  the residual  $r$ via rational Krylov subspace methods are presented in
\cite{MR2854603,MR2858340,guttel2010rational,MR2485440,MR3570279}.
The known expressions of  the residual concern  the Galerkin type (one-sided) projection process. 
When we generalize the residual expression for dealing with MOR, we replace it by the
Petrov-Galerkin (two-sided) projection process, which is widely used in MOR\cite{1586750,panzer2014model}.
We explicitly express $r_b$ and $r_c$ through the utilization of special bases derived from the Lanczos biorthogonalization procedure.
Thus,   an explicit  formula  of  $e(\mathfrak{s})$ is drived.

In the preceding research, we identify two further theoretical explanations for the model error. 
The explanations pertain to   the two parameters in the resolvent function $\mathcal {H}(\lambda,\mathfrak{s})=1/(\mathfrak{s}-\lambda)$.
 With respect to the variable $\mathfrak{s}$, it can be observed that the error formula is, in fact, the interpolation remainder when the Hermitian formula is applied to the resolvent function.

 The situation is more complicated with respect to the variable $\lambda$. It should be noted the Ritz values  correspond to  the quadrature nodes  of  Gauss-Christoffel quadrature. This motivates our research into the discovery of a  correlation between the error formula and Gauss quadrature.
The relevant theory for Hermitian $A$ and $b=c$ is presented in  book \cite[Chapter 3]{MR3024841} by Liesen and Strako{\v{s}}.  
After a series of derivations, it can be concluded that the error formula is, in fact, a Gauss-Christoffel quadrature remainder. 

Based on various error estimations, a multitude of adaptive algorithms for MOR have been developed \cite{MR2501562,MR3592472,panzer2014model}.
In designing algorithms for the interpolatory $H_{\infty}$ norm MOR, it is not appropriate to completely compute $e(s)$, since the explicit form of $e(s)$ contains the term $(\mathfrak{s}I-A)^{-1}$ inside.
Following a thorough examination of the explicit error formula, we are able to derive certain approximations that can be computed in the reduced problems.
Our two-sided greedy algorithms are then presented by modifying the one-sided projection algorithm in \cite{MR2858340}.
The next  shifts are obtained by identifying the maximal point of the current error. 
Subsequently, the proposed algorithms are  evaluated in comparison with other algorithms.
The numerical experiments demonstrate a comparable behaviour of its error in the $H_\infty$ norm to that of  \IRKA~(which computes $H_2$ norm optimal MOR),  while our algorithms require a considerably lesser CPU time.



The paper is structured as follows:
In Section \ref{sect:main_result_section}, we present the explicit  error formula for  the  two-sided projection method, together with a proof and  two further explanations.
Section \ref{sect:one_sided_error_formula_section} provides the explicit  error formula for  the  one-sided projection method.
A similarity can be observed between the error in the $l$-order two-sided projection MOR and that of the $2l$-order one-sided projection MOR.
In Section \ref{sect:H_infty_norm_section}, we present a review of the interpolatory $H_\infty$ norm MOR and provide approximations of $e(\mathfrak{s})$ which can be computed in the reduced problems.
Our  greedy two-sided algorithms are presented in Section \ref{section:greedy_algorithms_two_sided}, and the numerical testing on benchmark problems is provided in Section \ref{sect:numerical_experiments}.
The conclusions are outlined in Section \ref{sect:conclusion_section}.

%

\noindent {\bf Notation}:
%
The standard Krylov subspace is represented by the following symbol:
\begin{equation}\label{eqn:standard_Krylov_definition}
	\begin{aligned}
		\Krylov(A,b,l):=\span(b,Ab,\dots,A^{l-1}b).
	\end{aligned}
\end{equation}
The operation $\orth(V)$ produces the orthonormal basis matrix of $\span{(V)}$. 
The shifts sets for the left and right rational Krylov  subspaces are, respectively, denoted by 
$\mathbb{T}=\{t_j\}_{j=1}^{k_c}$
and $\mathbb{S}=\{s_j\}_{j=1}^{k_b}$. 
Consequently, the rational Krylov subspaces can be written as follows:
\begin{equation}\label{eqn:pure_RK_definition}
	\begin{aligned}
		\RK(A,b,\mathbb{S},k_b)&:=\span\{(A-s_1I)^{-1}b,(A-s_2I)^{-1}(A-s_1I)^{-1}b ,\ldots ,\prod\limits_{j=1}^{k_b}(A-s_jI)^{-1}b\},\\
\RK(A^H,c,\overline{\mathbb{T}},k_c)&:=\span\{(A-t_1I)^{-H}c,(A-t_2I)^{-H}(A-t_1I)^{-H}c ,\ldots ,\prod\limits_{j=1}^{k_c}(A-t_jI)^{-H}c\}.\\
	\end{aligned}
\end{equation}


The symbol $\mathbb{P}_m(\lambda)$ denotes the polynomial set, in which the degree of the polynomials is limited to $m$ or less.
In accordance with the polynomial  $\varphi(\lambda),$ the symbol $\mathbb{P}_{m}(\lambda)/\varphi(\lambda)$  represent  the rational function set.
The numerator of its element is a polynomial  of degree at most $m$.
In addition, the symbol 
$\mathbb{Q}_{l-1,l}(\lambda)$ denotes the rational function set. The numerator of its element  is of   degree $l-1$, while the denominator is of  degree $l$.


Write $\iota=\sqrt{-1}.$ 
The set of $A$'s eigenvalues is denoted by $\eig(A)=\{\lambda_i(A)\}_{i=1}^n$.
The symbol $f[x_1,x_2, \ldots, x_m]$ denotes the divided differences  of the function $f(x)$.
Unless otherwise specified, the norm $\| \cdot\|$ is an abbreviation of 
$\| \cdot\|_2$.
The symbol $\Sigma^-$ denotes the  complement set of $\Sigma$.
Matlab notations will be employed wherever feasible.

\section{The error formula  of the two-sided projection method}\label{sect:main_result_section}
We commence with the definition of  the  combined Krylov (CK) subspaces.
%
%
%
%
\begin{equation}\label{eqn:cks_basis_b}
	\begin{aligned}
\cks(A,b,k_b,m_b)&:=\RK(A,b,\mathbb{S},k_b)+ \Krylov(A,b,m_b), \\
		\cks(A^H,c,k_c,m_c)&:=\RK(A^H,c,\overline{\mathbb{T}},k_c)+ \Krylov(A^H,c,m_c), \\
	\end{aligned}
\end{equation}
where (rational) Krylov subspaces
are defined  by \eqref{eqn:standard_Krylov_definition} and \eqref{eqn:pure_RK_definition}.
With notations $l:=k_b+m_b=k_c+m_c$ and
\begin{equation}\label{eqn:prod_all_shifts}
\begin{aligned}
\varphi(\mathfrak{s}):=\prod\limits_{j=1}^{k_b}(\mathfrak{s}-s_j),
\qquad
\psi(\mathfrak{s}):=\prod\limits_{j=1}^{k_c}(\mathfrak{s}-t_j),\\
\end{aligned}
\end{equation}
we observe that (cf. Section \ref{sec:isomorphism_section})
\begin{equation*}
\begin{aligned}
\cks(A,b,k_b,m_b)=\Krylov(A,\varphi(A)^{-1}b,l), \qquad
\cks(A^H,c,k_c,m_c)=\Krylov(A^H,\psi(A)^{-H}c,l).\\
\end{aligned}
\end{equation*}

\begin{theorem}\label{thm:main_result_1}
Let $V$ and $W$ satisfy
$\span(V)=\cks(A,b,k_b,m_b),  \span(W)=\cks(A^H,c,k_c,m_c)$
defined by \eqref{eqn:cks_basis_b}.
Suppose that  the Lanczos biorthogonalization procedure of $[A,\varphi(A)^{-1}b,\psi(A)^{-H}c]$ does not break down until the $(l+1)$th iteration.
Let $\Lambda(\lambda):=\prod_{i=1}^{l}(\lambda-\lambda_i)$
be the monic characteristic polynomial of $(W^HV)^{-1}W^HAV$.
With \eqref{eqn:prod_all_shifts}, write
\begin{equation}\label{eqn:g_b_g_c_expression}
	\begin{aligned}
 g_b(\lambda):=\frac{\Lambda(\lambda)}{\varphi(\lambda)},\quad   g_c(\lambda):=\frac{\Lambda(\lambda)}{\psi(\lambda)}.
\end{aligned}
\end{equation}

Then,  it holds that
\begin{equation*}
\begin{aligned}
e(\mathfrak{s})=h(\mathfrak{s})-\tilde{h}(\mathfrak{s})&=c^H(\mathfrak{s}I-A)^{-1}b-c^HV(\mathfrak{s}W^HV-W^HAV)^{-1}W^Hb\\
&=\frac{1}{g_b(\mathfrak{s})g_c(\mathfrak{s})}c^Hg_c(A)(\mathfrak{s}I-A)^{-1}g_b(A)b.\\
\end{aligned}
\end{equation*}

\end{theorem}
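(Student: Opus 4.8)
The plan is to start from Grimme's identity $e(z) = c^H(zI-A)^{-1} b - \tilde h(z) = r_c^H (zI-A)^{-1} r_b$ for suitable residual vectors, but to make the argument self-contained I would instead work directly with the projected resolvent. Let $\ART = (W^HV)^{-1}W^HAV$ be the $l\times l$ compression, so that $\Lambda(\lambda) = \det(\lambda I - \ART)$ is its monic characteristic polynomial by hypothesis. The key structural facts I would invoke are the two isomorphisms recorded just before the theorem, namely $\cks(A,b,k_b,m_b) = \Krylov(A,\varphi(A)^{-1}b,l)$ and $\cks(A^H,c,k_c,m_c) = \Krylov(A^H,\psi(A)^{-H}c,l)$. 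Writing $\hat b := \varphi(A)^{-1}b$ and $\hat c := \psi(A)^{-H}c$, the two-sided projection onto these ordinary (block size one) Krylov spaces is exactly a nonsymmetric Lanczos process on $[A,\hat b,\hat c]$, which is assumed not to break down before step $l+1$.

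The heart of the matter is a residual/interpolation identity for ordinary Krylov spaces: if $p$ is any polynomial of degree $\le l-1$, then $V(z\ART - I)^{-1}\cdots$ applied to $\hat b$ reproduces $(zI-A)^{-1}$ against test vectors in $\span(W)$, with the defect governed by $\Lambda$. Concretely I would prove the vector identity
\begin{equation*}
(zI-A)^{-1}\hat b - V\,(zI-\ART)^{-1}\,e_1\|\hat b\|
  = \frac{1}{\Lambda(z)}\,\Lambda(A)\,(zI-A)^{-1}\hat b,
\end{equation*}
which says the right Lanczos basis interpolates the resolvent vector with remainder $\Lambda(A)(zI-A)^{-1}\hat b/\Lambda(z)$; this is the standard fact that the Lanczos approximation to $(zI-A)^{-1}\hat b$ has error obtained by replacing the scalar Cauchy kernel's polynomial part. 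An analogous identity holds on the left with $A^H$, $\hat c$, and $\bar z$. Substituting both into $e(z) = \hat c^{\,H}\psi(A)^H (zI-A)^{-1}\varphi(A)\,\hat b$ — wait, more carefully: $c = \psi(A)^H\hat c$ and $b = \varphi(A)\hat b$, so $e(z) = \hat c^{\,H}\psi(A)^{H}(zI-A)^{-1}\varphi(A)\hat b - \tilde h(z)$, and the projected term $\tilde h(z)$ collapses against the biorthogonal bases to $\hat c^{\,H}\psi(A)^H V(zI-\ART)^{-1}\cdots$. Combining the left and right remainder formulas, the error becomes $\frac{1}{\Lambda(z)^2}\,\hat c^{\,H}\psi(A)^H\Lambda(A)(zI-A)^{-1}\Lambda(A)\varphi(A)\hat b$; then regrouping $\psi(A)^H\Lambda(A) = \Lambda(A)\psi(A)^H = \big(\Lambda/\psi\big)(A)^{H}\psi(A)^H\psi(A)^H$… here I must be careful that the scalar prefactor is $\Lambda(z)^2/(\varphi(z)\psi(z))\cdot(\varphi(z)\psi(z)) $, i.e. $g_b(z)g_c(z) = \Lambda(z)^2/(\varphi(z)\psi(z))$, and the matrix factors assemble as $g_c(A)$ on the left and $g_b(A)$ on the right after cancelling $\varphi(A)$ and $\psi(A)^H$ against $b$ and $c$. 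Rewriting $c^H g_c(A) = \hat c^{\,H}\psi(A)^H \Lambda(A)/\psi(A) \cdot \psi(A)^{-H}\cdots$ and similarly on the right yields exactly $e(z) = \frac{1}{g_b(z)g_c(z)} c^H g_c(A)(zI-A)^{-1} g_b(A) b$.

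The main obstacle, and the step deserving real care, is establishing the two Lanczos remainder identities and verifying that the \emph{two-sided} (Petrov–Galerkin) projected transfer function $\tilde h(z)$ — built with $W^HV$ in the middle rather than an orthonormal basis — really does cancel the polynomial parts of \emph{both} the left and right resolvent expansions simultaneously. This is where the non-breakdown hypothesis on the Lanczos biorthogonalization of $[A,\hat b,\hat c]$ is used: it guarantees $W^HV$ is nonsingular, that $\Lambda$ genuinely has degree $l$, and that the biorthogonality $W^H V = \diag(\cdots)$ makes the cross terms vanish cleanly. I would handle this by first proving the scalar fact for a single Krylov space (the resolvent-interpolation lemma), then assembling the bilinear form $c^H(zI-A)^{-1}b - \tilde h(z)$ and inserting the two remainder representations, and finally doing the bookkeeping that turns $\Lambda(A)\varphi(A)^{-1}$ and $\Lambda(A)\psi(A)^{-1}$ into $g_b(A)$ and $g_c(A)$ while the scalar denominators multiply to $g_b(z)g_c(z)$. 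The degree bookkeeping — that $g_b$ and $g_c$ are genuine polynomials because $\varphi\mid\Lambda$ and $\psi\mid\Lambda$ is \emph{not} assumed but rather $g_b,g_c$ are rational — should be read off directly from \eqref{eqn:g_b_g_c_expression} without needing divisibility.
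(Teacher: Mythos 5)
Your overall strategy is the paper's: reduce to a standard Krylov pair via $\hat b=\varphi(A)^{-1}b$, $\hat c=\psi(A)^{-H}c$, characterize the Petrov--Galerkin residuals through $\Lambda$, and assemble $e(z)$ as a bilinear form in the two remainders (the paper does this in three steps, culminating in $e(z)=r_c^H(zI-A)^{-1}r_b$ with $x_c^Hr_b=0$ killing the cross term). But there is a genuine gap in how you set up the key identity. Your remainder formula is for the system with right-hand side $\hat b$: its residual is $\Lambda(A)\hat b/\Lambda(z)$, normalized by $\Lambda(z)$. The residual that actually enters $e(z)$ is the one for right-hand side $b$, namely $r_b:=b-(zI-A)x_b$ with $x_b\in\span(V)$ and $r_b\perp\span(W)$; it equals $\rho\,\Lambda(A)\hat b=\rho\,g_b(A)b$ with $\rho=1/g_b(z)=\varphi(z)/\Lambda(z)$, the $\varphi(z)$ coming from the normalization $\gamma_b(z,z)=1$ applied to the rational function $g_b$ rather than to $\Lambda$. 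These two residuals are not related by multiplying by $\varphi(A)$: when you substitute your identity into $h(z)=c^H\varphi(A)(zI-A)^{-1}\hat b$, the projected term you produce is $c^H\varphi(A)V(zI-\ART)^{-1}e_1$, which is \emph{not} $\tilde h(z)=c^HV(zI-\ART)^{-1}\varphi(\ART)e_1$, because $\varphi(A)V\neq V\varphi(\ART)$ ($\varphi$ has degree $k_b$ and pushes the later columns of $V$ out of $\Krylov(A,\hat b,l)$; this is exactly the content of Lemma \ref{lm:polynomial_expression_taub} and equations \eqref{eqn:e1_b_right}--\eqref{eqn:e1_c_left}, and it only collapses after the full exactness argument of Proposition \ref{th:algebraic_precision}). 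As a result your intermediate expression $\Lambda(z)^{-2}\,c^H\Lambda(A)(zI-A)^{-1}\Lambda(A)b$ differs from the theorem's answer $\frac{\varphi(z)\psi(z)}{\Lambda(z)^{2}}\,c^H\psi(A)^{-1}\Lambda(A)(zI-A)^{-1}\Lambda(A)\varphi(A)^{-1}b$ both in the scalar prefactor and in the matrix factors, and the concluding step where you ``cancel $\varphi(A)$ and $\psi(A)$ against $b$ and $c$'' while simultaneously adjusting the scalar by $\varphi(z)\psi(z)$ is not an algebraic identity --- it is precisely the statement that still needs proof.

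The repair is to do what the paper does in Lemma \ref{lm:rational_expression_of_residuals}: state the interpolation/residual lemma for $b$ and $c$ themselves (the orthogonality forces $r_b=\rho\Lambda(A)\varphi(A)^{-1}b$, and $\gamma_b(z,z)=1$ forces $\rho=1/g_b(z)$), then use $e(z)=c^H(zI-A)^{-1}r_b=\bigl[r_c^H(zI-A)^{-1}+x_c^H\bigr]r_b=r_c^H(zI-A)^{-1}r_b$, which you correctly identify as the crucial cross-term cancellation but do not carry out. Two smaller points: since $c=\psi(A)^H\hat c$ you have $c^H=\hat c^H\psi(A)$, not $\hat c^H\psi(A)^H$, and the commutation $\psi(A)^H\Lambda(A)=\Lambda(A)\psi(A)^H$ you invoke is false for non-normal $A$ (it is the corrected factor $\psi(A)$ that commutes with $\Lambda(A)$); and the reduction from general bases $V,W$ to the biorthogonal Lanczos bases (the paper's Step~3, via $V=\widehat VR$, $W=\widehat WS$ and invariance of $\eig((W^HV)^{-1}W^HAV)$) is used implicitly but never stated.
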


The proof is presented in three {\it steps}:
Theorem \ref{thm:arnoldi_speical_basis_result},
Theorem \ref{thm:general_subspace_speical_basis} and the final proof 
in  Section \ref{subsection:main_result_proof}.
Furthermore, we possess  Theorem \ref{th:main_result_include_E}
for the generalization onto the descriptor system.  
The  proof is a constructive type one by using a Lanczos biorthogonalization procedure. The aforementioned assumption ensures $W^HV$ is nonsingular. 
We commence by presenting a series of observations.

\begin{enumerate}
\item The combined Krylov subspaces span the same subspaces as  those described in  \cite[Definition 11]{MR2013459}. 
The definition \eqref{eqn:cks_basis_b} employs the product-type bases, with no consideration on the multiplicity of the shifts.
These special  bases are a truncation of the one  used in \cite[Theorem 6.1]{MR1920565}.
They are one of the various bases of the rational Krylov subspace.
The expressions of rational functions make it evident that they span the same subspace (cf. \cite[Lemma 4.2(d)]{guttel2010rational}).


\item It should be noted that $k_b+m_b=k_c+m_c$, but that $k_b$ does not need to be equal to $k_c$.
In order to encompass the greatest number of significant subspaces, our rational  Krylov subspace was established as inclusively as possible.
Theorem~\ref{thm:main_result_1} presents a comprehensive result concerning the interpolation property. 
It directly gives rise to the moments matching  results (e.g.\cite[Proposition 11.7, Proposition 11.8, Proposition 11.10, Proposition 11.11]{MR2155615}). 
From  the  explicit form of $e(\mathfrak{s})$, it is apparent  that $e(\mathfrak{s})$ fully satisfies
\cite[Definition 16]{MR2013459}
(an equivalent condition of moments matching).


\item It is observed that the Krylov subspace type methods are incapable of maintaining the stability of the system. 
The reasons are now apparent from an examination of  Theorem \ref{thm:main_result_1}.  
The error formula is not reliant on the stability of the matrix $A$.  Specifically, the condition of  Theorem \ref{thm:main_result_1} does not require \eig$(A)$ be in the left  half plane. 
Therefore, it is not reasonable to anticipate that this two-sided projection method will maintain stability unless additional constraints are introduced.
Note that there exist some other applications which do not involve the stability of  $A$
(e.g. \cite[(3.4)]{MR2776701} \cite[Theorem 3.1]{MR4392235}).

%

\end{enumerate}


\subsection{{\it Step 1:} Special subspaces (standard Krylov subspaces) and special bases}
We first prove a special case of 
Theorem \ref{thm:main_result_1} when $l=m_b=m_c =:m, 
 k_b=k_c=0.$ The main result of this {\it step} is Theorem  \ref{thm:arnoldi_speical_basis_result}.
\subsubsection{Lanczos biorthogonalization procedure}\label{section:unsymmetric_Lanczos}


 Our proof utilizes the bases derived from the Lanczos biorthogonalization procedure of  $(A,b,c)$  (cf. \cite[Section 7.1]{MR1990645}).  
 Thus, it is necessary to assume that the procedure does not break down.
The conditions for the successful execution of the procedure have been well researched\cite{MR1201315,MR4311639}.
The relations resulting from the Lanczos biorthogonalization procedure of  $(A,b,c)$ are concluded by:
\begin{equation}\label{eqn:Krylov_lanczos}
\begin{aligned}
	v_1&=b,\quad w_1=c,\quad 
	c^Hb=1,\\
AV_m&=V_{m+1}\underline{T}_m, \quad \span(V_m)={\rm Krylov}(A,b,m),
\quad \span(V_{m+1})={\rm Krylov}(A,b,m+1),\\
A^HW_m&=W_{m+1}\underline{K}_m, \quad \span(W_m)={\rm Krylov}(A^H,c,m),
\quad \span(W_{m+1})={\rm Krylov}(A^H,c,m+1),\\
W_m^HV_m&=I_m, \quad W_{m+1}^HV_{m+1}=I_{m+1}, \quad
W_m^HAV_m=T_m,
\quad  V_m^HA^HW_m=K_m,
\quad T_m=K_m^H,\\
\underline{T}_m&=
\left[ \begin{array}{ccccc}
\alpha_1& \beta_2 & &&\\
\gamma_2 & \alpha_2 &\beta_3 &&\\
 &   \ddots & \ddots&\ddots&\\
&&\gamma_{m-1}&\alpha_{m-1}&\beta_m\\
&&&\gamma_m&\alpha_m\\
\hline
&&&&\gamma_{m+1}\\
\end{array}
\right]
=\left[ \begin{array}{c}
	T_m\\
	\hline
	\gamma_{m+1}e_m^H
	\end{array}\right].\\
\end{aligned}
\end{equation}

\begin{lemma}\label{lm:polynomial_expression_taub}
Let $\tau_j(\lambda)=a_j^j\lambda^j+a_{j-1}^j\lambda^{j-1}\cdots+a_1^j\lambda+a_0^j$ be a  polynomial  of degree $j$.
By \eqref{eqn:Krylov_lanczos}, we have 
\begin{eqnarray*}
	\tau_j(A)b&=&V_m\tau_j(T_m)e_1,      \qquad\qquad\qquad\qquad\  j<m\\
	\tau_m(A)b&=&V_m\tau_m(T_m)e_1+a_m^m\zeta_mv_{m+1},\qquad j=m
\end{eqnarray*}
where $\zeta_m=\gamma_{m+1}\cdots\gamma_3\gamma_2.$
\end{lemma}

\begin{lemma}\label{lm:Krylov_Cayley}
	Suppose relations \eqref{eqn:Krylov_lanczos} hold.
	Let $r$ be a nonzero vector
	in $\Krylov(A,b,{m+1})$  that is  orthogonal to
	$\Krylov(A^H,c,m)$.
	It follows that the vector $r$ can be expressed as  $r=\rho \Lambda_m(A)b,$ where  the scalar $\rho \neq 0$
	and $\Lambda_m(\lambda)= \det(\lambda I- T_m)$ is the  monic characteristic polynomial of $T_m$.	
	
\end{lemma}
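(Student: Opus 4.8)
The plan is to pull $r$ back to the coordinate (tridiagonal) picture furnished by the Lanczos biorthogonalization and identify there the polynomial that annihilates the relevant starting vector. Since $r\in\Krylov(A,b,m+1)=\span(b,Ab,\dots,A^mb)$, we may write $r=\tau(A)b$ for a polynomial $\tau$ of degree at most $m$. Distinguishing the two cases $\deg\tau<m$ and $\deg\tau=m$ and applying Lemma~\ref{lm:polynomial_expression_taub} expresses $r$ in the Lanczos basis: either $r=V_m\tau(T_m)e_1$, or $r=V_m\tau(T_m)e_1+a_m^m\zeta_m v_{m+1}$ with leading coefficient $a_m^m$ and $\zeta_m=\gamma_{m+1}\cdots\gamma_2$. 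Next I would invoke biorthogonality: from \eqref{eqn:Krylov_lanczos} we have $W_m^HV_m=I$, and $v_{m+1}\perp\span(W_m)=\Krylov(A^H,c,m)$ gives $W_m^Hv_{m+1}=0$; since by hypothesis $r\perp\Krylov(A^H,c,m)$, left-multiplying the expression for $r$ by $W_m^H$ yields $\tau(T_m)e_1=0$ in either case.

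The heart of the argument is then a cyclicity statement for $T_m$. Under the non-breakdown hypothesis the subdiagonal entries $\gamma_2,\dots,\gamma_{m+1}$ are nonzero (in particular $\zeta_m\neq0$), so $T_m$ is an unreduced tridiagonal matrix and $e_1$ is a cyclic vector for it; equivalently, $e_1,T_me_1,\dots,T_m^{m-1}e_1$ are linearly independent, so the minimal polynomial of $e_1$ with respect to $T_m$ has degree exactly $m$ and therefore coincides, up to a nonzero scalar, with the characteristic polynomial $\Lambda_m(\lambda)=\det(\lambda I-T_m)$. Because $\tau(T_m)e_1=0$ with $\deg\tau\le m$, this forces $\tau=\rho\,\Lambda_m$ for some scalar $\rho$; if $\rho=0$ then $\tau\equiv0$ and $r=0$, contradicting $r\neq0$, so $\rho\neq0$ (and in fact $\deg\tau=m$, $\rho=a_m^m$).

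Finally I would substitute back. By Cayley--Hamilton $\Lambda_m(T_m)=0$, so inserting $\tau=\rho\,\Lambda_m$ into the degree-$m$ formula of Lemma~\ref{lm:polynomial_expression_taub} gives $r=\rho\,\Lambda_m(A)b$, which is the claimed identity (and the same formula with $\tau=\Lambda_m$ shows $\Lambda_m(A)b=\zeta_m v_{m+1}$, confirming consistency). The step I expect to be the main obstacle is the cyclicity claim: arguing carefully from the unreduced, nonzero-off-diagonal structure of $T_m$ that $e_1$ is cyclic, so that no polynomial of degree $<m$ can annihilate $e_1$ and the degree-$m$ annihilator is unique up to scaling; the rest is bookkeeping with Lemma~\ref{lm:polynomial_expression_taub} and the biorthogonality relations in \eqref{eqn:Krylov_lanczos}.
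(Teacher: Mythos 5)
Your proof is correct. Be aware that the paper does not actually write out a proof of this lemma: it defers to the cited references, remarking only that the arguments are "quite similar." Your route — write $r=\tau(A)b$ with $\deg\tau\le m$, push it into the Lanczos coordinates via Lemma~\ref{lm:polynomial_expression_taub}, use $W_m^HV_m=I$ and $W_m^Hv_{m+1}=0$ to conclude $\tau(T_m)e_1=0$, and then invoke cyclicity of $e_1$ for the unreduced tridiagonal $T_m$ to force $\tau=\rho\Lambda_m$ — is sound, and the cyclicity claim does hold under the non-breakdown hypothesis (all $\gamma_j\neq 0$, which is also implicit in the dimension statements of \eqref{eqn:Krylov_lanczos}). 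The one place where the standard argument in the cited sources is more direct is your middle step: instead of passing through the minimal polynomial of $e_1$, one can write $r=V_{m+1}y$ and note that $r\perp\span(W_m)$ kills the first $m$ coordinates of $y$ outright (since $W_m^HV_{m+1}=[I\ \,0]$), so $r=y_{m+1}v_{m+1}$; then Lemma~\ref{lm:polynomial_expression_taub} applied to $\tau=\Lambda_m$ together with Cayley--Hamilton gives $\Lambda_m(A)b=\zeta_mv_{m+1}$ with $\zeta_m=\gamma_{m+1}\cdots\gamma_2\neq0$, whence $r=(y_{m+1}/\zeta_m)\Lambda_m(A)b$ and $\rho=y_{m+1}/\zeta_m\neq0$ because $r\neq0$. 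This variant buys a shorter proof that never needs the minimal-polynomial/cyclic-vector machinery; your version buys the extra (and reusable) observation that \emph{no} polynomial of degree less than $m$ can produce such an $r$, which is what underlies the uniqueness up to scaling. Either way the conclusion and the hypotheses used are identical.
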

The above lemmas are generalizations of
\cite[Lemma  3.1, Lemma  3.2]{MR1716118} or \cite[Lemma 2.1, Corollary 2.1]{MR1323816}.
As the proofs are quite similar, we do not provide
the details. 

\subsubsection{The derivation}\label{subsect:infinty_shifts_special_basis}


Similar to the case  of  the linear equations  \cite[Chapter 5]{MR1990645}, we require the residual be orthogonal to the given subspace.  With notations
\begin{equation}\label{eqn:r_b_r_c_definition}
\begin{aligned}
r_b:=b-(\mathfrak{s}I-A)x_b,
\quad r_c:=c-(\mathfrak{s}I-A)^Hx_c,\\
\end{aligned}
\end{equation}
we impose the Petrov-Galerkin conditions:
\begin{equation*}
\begin{aligned}
 x_b&\in{\rm Krylov}(A,b,m)=\span(V_m), \quad  &\st \quad r_b&\bot{\rm Krylov}(A^H,c,m)=\span(W_m),\\
%
%
%
{\rm and} \quad x_c&\in{\rm Krylov}(A^H,c,m)=\span(W_m), \quad  &\st \quad r_c&\bot{\rm Krylov}(A,b,m)=\span(V_m).\\
\end{aligned}
\end{equation*}
Thus,  we get $\tilde{h}(\mathfrak{s})=x_c^Hb=c^Hx_b,$ where
	\begin{equation*}
		\begin{aligned}
x_b=V(\mathfrak{s}I-W^HAV)^{-1}W^Hb,
\quad
x_c=W(\mathfrak{s}I-W^HAV)^{-H}V^Hc.\\
\end{aligned}
\end{equation*}

With \eqref{eqn:r_b_r_c_definition}, we easily observe that:
For any complex $\mathfrak{s}$, it holds that
\begin{equation*}
\begin{aligned}
r_b\in{\rm Krylov}(A,b,{m+1})=
\span(V_{m+1}),
\quad
r_c\in{\rm Krylov}(A^H,c,{m+1})
= \span(W_{m+1}).
\end{aligned}
\end{equation*}

We define the polynomial expressions:
\begin{equation*}
\begin{aligned}
x_b&=\chi_b(A,\mathfrak{s})b, \quad \chi_b(\lambda,\mathfrak{s}) \in \mathbb{P}_{m-1}(\lambda),\\
r_b&=\gamma_b(A,\mathfrak{s})b,\quad \gamma_b(\lambda,\mathfrak{s}) \in \mathbb{P}_{m}(\lambda).\\
\end{aligned}
\end{equation*}
Their relations are given by
\begin{equation*}
	\begin{aligned}
		r_b&=b-(\mathfrak{s}I-A)x_b,\\
		\gamma_b(\lambda,\mathfrak{s})&=1-(\mathfrak{s}-\lambda)\chi_b(\lambda,\mathfrak{s}),\\
		\chi_b(\lambda,\mathfrak{s})&=\frac{1-\gamma_b(\lambda,\mathfrak{s})}{\mathfrak{s}-\lambda}.\\
	\end{aligned}
\end{equation*}
We easily find $\gamma_b(\lambda,\mathfrak{s})$ satisfies
a constraint condition:
\begin{equation}\label{eqn:r_b_constraint_condition_infty}
	\begin{aligned}
\gamma_b(\mathfrak{s},\mathfrak{s})=1.
\end{aligned}
\end{equation}
%

Similar to \cite[(3.8)]{MR1323816} and \cite[Theorem 3.1]{MR1716118}, we deduce  the polynomial formulas for the residuals.
\begin{lemma}\label{lm:Krylov_rb_rc}
Suppose relations \eqref{eqn:Krylov_lanczos} hold.
Then, it holds that 
%
%
\begin{equation*}
\begin{aligned}
r_b&=\gamma_b(A,\mathfrak{s})b, \qquad r_c^H=c^H\gamma_c(A,\mathfrak{s}),\\
\gamma_b(\lambda,\mathfrak{s})& =\gamma_c(\lambda,\mathfrak{s})
=\frac{\Lambda(\lambda)}{\Lambda(\mathfrak{s})}, 
\end{aligned}
\end{equation*}
where $\Lambda(\lambda):=\prod_{i=1}^{m}(\lambda-\lambda_i)$
is the monic characteristic polynomial of $T_m=W^HAV$.
\end{lemma}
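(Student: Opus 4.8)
The plan is to establish the formula for $\gamma_b$ and then mirror the argument for $\gamma_c$. First I would recall from the discussion preceding the lemma that the Petrov--Galerkin construction already delivers $r_b\in\Krylov(A,b,m+1)$ together with $r_b\perp\Krylov(A^H,c,m)$, and that $r_b=\gamma_b(A,z)b$ with $\gamma_b(\cdot,z)\in\mathbb{P}_m(\lambda)$ obeying the normalization $\gamma_b(z,z)=1$ from \eqref{eqn:r_b_constraint_condition_infty}. Since the Lanczos biorthogonalization does not break before step $m+1$, $\Krylov(A,b,m+1)$ has dimension $m+1$, so no nonzero polynomial of degree $\le m$ annihilates $b$; in particular $\gamma_b(z,z)=1$ forces $r_b\ne 0$. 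Then Lemma \ref{lm:Krylov_Cayley} applies and gives $r_b=\rho\,\Lambda_m(A)b$ for some scalar $\rho=\rho(z)\ne 0$, where $\Lambda_m(\lambda)=\det(\lambda I-T_m)$ is monic of degree $m$; comparing with $r_b=\gamma_b(A,z)b$ and using again that a degree-$\le m$ polynomial is determined by its action on $b$, we obtain $\gamma_b(\lambda,z)=\rho\,\Lambda_m(\lambda)$ as an identity of polynomials in $\lambda$. Evaluating at $\lambda=z$ and invoking $\gamma_b(z,z)=1$ (which needs $\Lambda_m(z)\ne 0$, i.e.\ $z$ not a Ritz value --- precisely the condition making $\tilde h(z)$ finite) yields $\rho=1/\Lambda_m(z)$, hence $\gamma_b(\lambda,z)=\Lambda_m(\lambda)/\Lambda_m(z)=\Lambda(\lambda)/\Lambda(z)$.

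For $r_c$ I would run the same argument through the transposed system. The relations \eqref{eqn:Krylov_lanczos} are symmetric under the exchange $(A,b,V_m,T_m)\leftrightarrow(A^H,c,W_m,K_m)$, so the mirror image of Lemma \ref{lm:Krylov_Cayley} gives $r_c=\rho_c\,\mu_m(A^H)c$ with $\mu_m(\lambda)=\det(\lambda I-K_m)$ monic of degree $m$ and $\rho_c\ne 0$. Taking conjugate transpose, $r_c^H=\bar\rho_c\,c^H(\mu_m(A^H))^H$. The only extra bookkeeping is to identify $(\mu_m(A^H))^H$ with $\Lambda_m(A)$: from $K_m=T_m^H$ and the identity $\det(\lambda I-M^H)=\overline{\det(\bar\lambda I-M)}$ one gets $\mu_m(\lambda)=\overline{\Lambda_m(\bar\lambda)}$, i.e.\ $\mu_m$ has coefficients conjugate to those of $\Lambda_m$, so $(\mu_m(A^H))^H=\Lambda_m(A)$. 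Thus $r_c^H=\bar\rho_c\,c^H\Lambda_m(A)=:c^H\gamma_c(A,z)$ with $\gamma_c(\lambda,z)=\bar\rho_c\,\Lambda_m(\lambda)$, and the constraint $\gamma_c(z,z)=1$ --- the $r_c$-analogue of \eqref{eqn:r_b_constraint_condition_infty}, read off from $r_c^H=c^H-x_c^H(zI-A)$ --- fixes $\bar\rho_c=1/\Lambda_m(z)$, so $\gamma_c(\lambda,z)=\Lambda(\lambda)/\Lambda(z)=\gamma_b(\lambda,z)$.

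I expect the main obstacle to be bookkeeping rather than a genuine difficulty: keeping the conjugate transposes straight so that the monic characteristic polynomial of $K_m=T_m^H$ is correctly matched with $\Lambda$, and excluding the two degenerate situations --- that $r_b,r_c$ are actually nonzero (needed to invoke Lemma \ref{lm:Krylov_Cayley}) and that $z$ avoids the Ritz values $\{\lambda_i\}$ (needed for $\Lambda_m(z)\ne 0$); both follow from the non-breakdown hypothesis and from $\gamma_b(z,z)=\gamma_c(z,z)=1$. Everything else is the translation of \cite[(3.8)]{MR1323816} and \cite[Theorem 3.1]{MR1716118} into the present non-Hermitian biorthogonal setting, for which the earlier lemmas have already laid the groundwork.
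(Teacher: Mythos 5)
Your argument is correct and follows essentially the same route as the paper: invoke Lemma \ref{lm:Krylov_Cayley} to write $r_b=\rho(z)\Lambda(A)b$, then use the normalization $\gamma_b(z,z)=1$ to pin down $\rho(z)=1/\Lambda(z)$, and mirror this for $r_c$. The only difference is that you spell out the details the paper dismisses with ``similarly'' --- the conjugate-transpose bookkeeping identifying $(\mu_m(A^H))^H$ with $\Lambda(A)$ via $K_m=T_m^H$, and the nondegeneracy checks $r_b\neq 0$ and $\Lambda(z)\neq 0$ --- which is a welcome tightening rather than a departure.
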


\begin{proof}
We observe that
$r_b\in{\rm Krylov}(A,b,{m+1})$
and $r_b \bot {\rm Krylov}(A^H,c,m).$
%
%
By   Lemma \ref{lm:Krylov_Cayley}, we directly obtain 
$r_b=\rho \Lambda(A)b,$
where $\Lambda(\lambda)$
is the monic characteristic polynomial of $T_m=W^HAV$.
Obviously, when $\mathfrak{s}$ varies, $\rho$ changes. Thus, we can regard $\rho$ as a function of  the variable $\mathfrak{s}$.
%
It implies that
$\gamma_b(\lambda,\mathfrak{s}) =\rho(\mathfrak{s})\Lambda(\lambda).$
By the constraint condition \eqref{eqn:r_b_constraint_condition_infty}, we directly obtain
$\rho(\mathfrak{s})=1/{\Lambda(\mathfrak{s})}$
and then
$\gamma_b(\lambda,\mathfrak{s})
=\Lambda(\lambda)/\Lambda(\mathfrak{s}).$
Similarly, we  get the result about $r_c$.
\end{proof}
%

\begin{theorem}\label{thm:arnoldi_speical_basis_result}
Suppose the Lanczos biorthogonalization procedure of $(A,b,c)$ does not break down until the $(m+1)$th iteration.
The relations of the formed matrices are given by (\ref{eqn:Krylov_lanczos}).
%
Then, it holds that
\begin{equation*}
\begin{aligned}
h(\mathfrak{s})-\tilde{h}(\mathfrak{s})
&=c^H(\mathfrak{s}I-A)^{-1}b-c^HV(\mathfrak{s}W^HV-W^HAV)^{-1}W^Hb\\
&=c^H(\mathfrak{s}I-A)^{-1}b-c^HV(\mathfrak{s}I-W^HAV)^{-1}W^Hb\\
&=\frac{1}{[\Lambda(\mathfrak{s})]^2}c^H(\mathfrak{s}I-A)^{-1}[\Lambda(A)]^2b,
\end{aligned}
\end{equation*}
where $\Lambda(\lambda):=\prod\limits_{i=1}^{m}(\lambda-\lambda_i)$
is the monic characteristic polynomial of $T_m=W^HAV$.
\end{theorem}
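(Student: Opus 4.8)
The plan is to funnel the error through the residual vectors $r_b$ and $r_c$ of \eqref{eqn:r_b_r_c_definition}, and then feed in the polynomial residual formula already established in Lemma~\ref{lm:Krylov_rb_rc}. As a preliminary bookkeeping step, note that $W_m^HV_m=I$ in \eqref{eqn:Krylov_lanczos} turns the reduced pencil $zW^HV-W^HAV$ into $zI-T_m$, which both justifies the first equality in the statement and shows that the reduced resolvent exists exactly when $z\notin\eig(T_m)$, i.e.\ when $\Lambda(z)\neq0$ (tacitly required for the final formula to make sense).

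First I would derive Grimme's representation in the present notation. Write $x:=(zI-A)^{-1}b$ for the exact solve; since $x_b=V(zI-W^HAV)^{-1}W^Hb$ satisfies $r_b=b-(zI-A)x_b$, rearranging gives $x_b=x-(zI-A)^{-1}r_b$, hence
\[
\tilde h(z)=c^Hx_b=h(z)-c^H(zI-A)^{-1}r_b,
\qquad\text{so}\qquad
e(z)=c^H(zI-A)^{-1}r_b .
\]
Then I would symmetrize using the left residual: expanding $r_c^H(zI-A)^{-1}r_b=c^H(zI-A)^{-1}r_b-x_c^Hr_b$ (using $r_c^H(zI-A)^{-1}=c^H(zI-A)^{-1}-x_c^H$, the analogue of the previous identity on the $c$-side), and invoking the Petrov--Galerkin condition $r_b\bot\span(W_m)$ together with $x_c\in\span(W_m)$, the term $x_c^Hr_b$ drops out. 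This yields the two-sided error representation $e(z)=r_c^H(zI-A)^{-1}r_b$.

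Finally I would substitute the explicit residuals. Lemma~\ref{lm:Krylov_rb_rc} gives $r_b=\gamma_b(A,z)b=\Lambda(A)b/\Lambda(z)$ and $r_c^H=c^H\gamma_c(A,z)=c^H\Lambda(A)/\Lambda(z)$, with $\Lambda$ the monic characteristic polynomial of $T_m=W^HAV$. Plugging these in,
\[
e(z)=\frac{1}{[\Lambda(z)]^2}\,c^H\Lambda(A)(zI-A)^{-1}\Lambda(A)b ,
\]
and since $\Lambda(A)$ and $(zI-A)^{-1}$ are both functions of $A$ they commute, so $c^H\Lambda(A)(zI-A)^{-1}\Lambda(A)b=c^H(zI-A)^{-1}[\Lambda(A)]^2b$, which is exactly the claimed identity.

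I do not anticipate a real obstacle: the argument is pure assembly of facts already in hand, and the only places requiring care are the nonsingularity/well-definedness bookkeeping ($W^HV=I$, $\Lambda(z)\neq0$, and the Lanczos run reaching step $m+1$ so that Lemma~\ref{lm:Krylov_rb_rc} is applicable) and placing the orthogonality relation $x_c^Hr_b=0$ on the correct side. All the conceptual content lives upstream, in Lemma~\ref{lm:Krylov_rb_rc} (and behind it Lemma~\ref{lm:Krylov_Cayley}), which pins down the residual, up to a scalar, as $\Lambda(A)b$; the present theorem is just the symmetric combination of the left and right versions of that statement.
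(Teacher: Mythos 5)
Your proposal is correct and follows essentially the same route as the paper's own proof: derive $e(z)=c^H(zI-A)^{-1}r_b$, symmetrize to $r_c^H(zI-A)^{-1}r_b$ using $x_c^Hr_b=0$ from the Petrov--Galerkin orthogonality, and substitute the residual formula of Lemma~\ref{lm:Krylov_rb_rc}. The only difference is cosmetic: you make explicit the well-definedness bookkeeping ($W^HV=I$ and $\Lambda(z)\neq0$) that the paper leaves implicit.
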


\begin{proof}
	Note that
\begin{equation*}
\begin{aligned}
r_b&=b-(\mathfrak{s}I-A)x_b,\\
(\mathfrak{s}I-A)^{-1}b-x_b&=(\mathfrak{s}I-A)^{-1}r_b,\\
c^H(\mathfrak{s}I-A)^{-1}b-c^Hx_b&=c^H(\mathfrak{s}I-A)^{-1}r_b,\\
 \mbox{similarly,}\quad  c^H(\mathfrak{s}I-A)^{-1}&=r_c^H(\mathfrak{s}I-A)^{-1}+x_c^H.\\
\end{aligned}
\end{equation*}

Since $x_c\in{\rm Krylov}(A^H,c,m)  \quad \mbox{and} \quad r_b\bot {\rm Krylov}(A^H,c,m),$
we know $x_c^Hr_b=0.$
By  
Lemma \ref{lm:Krylov_rb_rc}, we  obtain
\begin{equation*}
\begin{aligned}
e(\mathfrak{s})&=h(\mathfrak{s})-\tilde{h}(\mathfrak{s})
=c^H(\mathfrak{s}I-A)^{-1}b-c^Hx_b=c^H(\mathfrak{s}I-A)^{-1}r_b
=[r_c^H(\mathfrak{s}I-A)^{-1}+x_c^H]r_b\\
&=r_c^H(\mathfrak{s}I-A)^{-1}r_b
=c^H\frac{\Lambda(A)}{\Lambda(\mathfrak{s})}(\mathfrak{s}I-A)^{-1}\frac{\Lambda(A)}{\Lambda(\mathfrak{s})}b
=\frac{1}{[\Lambda(\mathfrak{s})]^2}c^H(\mathfrak{s}I-A)^{-1}[\Lambda(A)]^2b.
\end{aligned}
\end{equation*}
\end{proof}

\begin{remark}
	\label{rmk:assumption_cb_equal_1}
The relation \eqref{eqn:Krylov_lanczos} requires $c^Hb=1$, which is not a common occurrence in practice.
 Nevertheless, this has no impact on the result of  Theorem \ref{thm:arnoldi_speical_basis_result}.
The  coefficient $m_0:=c^Hb$ is indeed the first moment of 
$c^HA^ib$ \cite[(2.1)]{MR4311639}.
Note that $h(\mathfrak{s})=c^H(\mathfrak{s}I-A)^{-1}b=m_0 c^H(\mathfrak{s}I-A)^{-1}(b/m_0).$
If  $m_0\neq 1,$ then  the conclusions of  Theorem \ref{thm:arnoldi_speical_basis_result} remain valid, after
our  applying Theorem \ref{thm:arnoldi_speical_basis_result}
onto $(A,b/m_0,c)$ and multiplying the  resulting expression by $m_0$.
\end{remark}


The relation $e(\mathfrak{s})=r_c^H(\mathfrak{s}I-A)^{-1}r_b$ is known in Grimme's PhD thesis\cite[Theorem 5.1]{grimme1997krylov}.
The traditional result shows that $e(\mathfrak{s})=O(1/\mathfrak{s}^{2m+1})$ (e.g.\cite[Section 11.2.1]{MR2155615} \cite[(3.3.26)]{MR3024841}). We now present a clear and explicit formulation of the error.

\subsection{ {\it Step 2:} General subspaces (rational Krylov subspaces) and special bases}
\label{section:special_basis}
\subsubsection{Biorthogonal bases of $\CK(A,b,k_b,m_b)$ and $\CK(A^H,c,k_c,m_c)$}\label{sec:isomorphism_section}
The rational Krylov subspace can be obtained by a special standard Krylov subspace (e.g.\cite[Lemma 4.2(a)]{guttel2010rational}).
%
%
%
%
%
%
%
With \eqref{eqn:cks_basis_b}, we have
\begin{equation}\label{eq:CK_and_Krylov_subspaces_relation}
\begin{aligned}
\cks(A,b,k_b,m_b)&=\Krylov(A,\varphi(A)^{-1}b,l), &\cks(A,b,k_b,m_b+1)&=\Krylov(A,\varphi(A)^{-1}b,l+1),\\
\cks(A^H,c,k_c,m_c)&=\Krylov(A^H,\psi(A)^{-H}c,l),&\cks(A^H,c,k_c,m_c+1)&=\Krylov(A^H,\psi(A)^{-H}c,l+1).\\
\end{aligned}
\end{equation}


We use the Lanczos biorthogonalization procedure of $A$, $\varphi(A)^{-1}b$ and $\psi(A)^{-H}c$ to form the biorthogonal bases of $\Krylov(A,\varphi(A)^{-1}b,l+1)$ and $\Krylov(A^H,$ $\psi(A)^{-H}c,l+1)$.
%
%
%
%
%
%
%
%
%
%
%
%
%
The relations are summarized as follows.

{\small
\begin{equation}\label{eqn:generalized_nonsymmetric_Lanczos}
\begin{aligned}
	\widehat v_1&=\varphi(A)^{-1}b,
	\quad 
	\widehat w_1=\psi(A)^{-H}c,
	\quad 
	\widehat w_1^H\widehat v_1=1,\\
A\widehat V_{l}&=\widehat V_{l+1}\underline{\widehat T}_{l}, \quad \span(\widehat V_{l})={\rm Krylov}(A,\varphi(A)^{-1}b,l),\quad
\span(\widehat V_{l+1})={\rm Krylov}(A,\varphi(A)^{-1}b,l+1),\\
A^H \widehat W_{l}&=\widehat W_{l+1}\underline{\widehat K}_{l}, \quad \span(\widehat W_{l})={\rm Krylov}(A^H,\psi(A)^{-H}c,l),
\quad \span(\widehat W_{l+1})={\rm Krylov}(A^H,\psi(A)^{-H}c,l+1),\\
\widehat W_{l}^H\widehat V_{l}&=I,\quad \widehat W_{l+1}^H\widehat V_{l+1}=I, \quad
\widehat W_{l}^H A\widehat V_{l}=\widehat T_{l},
\quad
\widehat V^HA^H\widehat W_{l}=\widehat K_{l},
\quad
\widehat T_{l}=\widehat K_{l}^H,\\
%
\underline{\widehat T}_{l+1}&=
\left[ \begin{array}{ccccc}
\widehat \alpha_1& \widehat \beta_2 & &&\\
\widehat \gamma_2 & \widehat \alpha_2 &\widehat \beta_3 &&\\
 &   \ddots & \ddots&\ddots&\\
&&\widehat \gamma_{l-1}&\widehat \alpha_{l-1}&\widehat \beta_l\\
&&&\widehat \gamma_l&\widehat \alpha_l\\
\hline
&&&&\widehat \gamma_{l+1}\\
\end{array}
\right]
=\left[ \begin{array}{c}
	\widehat T_m\\
	\hline
	\widehat \gamma_{m+1}e_m^H
\end{array}\right].\\
\end{aligned}
\end{equation}
}

As in Section \ref{subsect:infinty_shifts_special_basis}, we adopt the same notations for
$x_b,x_c,r_b,r_c$ and their rational function expressions.
%
%
%
The two-sided projection (Petrov-Galerkin) conditions  are imposed as follows:
\begin{equation}\label{eqn:x_b_in_CK_projection}
\begin{aligned}
&x_b\in \CK(A,b,k_b,m_b), 
\qquad  r_b=b-(\mathfrak{s}I-A)x_b \in \CK(A,b,k_b,m_b+1) ,\\
\st &\quad  r_b \bot \CK(A^H,c,k_c,m_c),\\
\end{aligned}
\end{equation}
and
\begin{equation*}
\begin{aligned}
&x_c \in \CK(A^H,c,k_c,m_c),
\qquad  r_c=c-(\mathfrak{s}I-A)^Hx_c \in \CK(A^H,c,k_c,m_c+1),\\
\st  &\quad r_c\bot \CK(A,b,k_b,m_b).\\
\end{aligned}
\end{equation*}

Thus, we get $\tilde{h}(\mathfrak{s})=x_c^Hb=c^Hx_b,$ where 
\begin{equation*}
\begin{aligned}
x_b&=\widehat V(\mathfrak{s}I-\widehat W^HA\widehat V)^{-1}\widehat W^Hb,
\quad 
x_c=\widehat W(\mathfrak{s}I-\widehat W^HA\widehat V)^{-H}\widehat V^Hc.\\
\end{aligned}
\end{equation*}

By \eqref{eq:CK_and_Krylov_subspaces_relation}, the rational function expressions are as follows:
\begin{equation*}
	\begin{aligned}
		x_b&=\chi_b(A,\mathfrak{s})b, \quad \chi_b(\lambda,\mathfrak{s}) \in \mathbb{P}_{l-1}(\lambda)/\varphi(\lambda),\\
		r_b&=\gamma_b(A,\mathfrak{s})b,\quad \gamma_b(\lambda,\mathfrak{s}) \in \mathbb{P}_{l}(\lambda)/\varphi(\lambda).\\
	\end{aligned}
\end{equation*}
Their relations are given by the following formulas.
\begin{equation}\label{eqn:relation_r_b_x_b_CK_case}
	\begin{aligned}
		r_b&=b-(\mathfrak{s}I-A)x_b,\\
		\gamma_b(\lambda,\mathfrak{s})&=1-(\mathfrak{s}-\lambda)\chi_b(\lambda,\mathfrak{s}),\\
		\chi_b(\lambda,\mathfrak{s})&=\frac{1-\gamma_b(\lambda,\mathfrak{s})}{\mathfrak{s}-\lambda}.\\
	\end{aligned}
\end{equation}
Therefore, we still have the 
constraint condition:
$\gamma_b(\mathfrak{s},\mathfrak{s})=1$.
%

\subsubsection{The derivation}
Prior to the presentation of the final expressions of $r_b$ and $r_c$ derived from the two-sided projection method,
it is noteworthy to mention that the residual expression from the one-sided projection method has been derived by other approaches
(e.g. \cite[(3.5)]{MR2485440}\cite[(7.17)]{guttel2010rational}).
The result has been employed for the purposes of theoretical analysis \cite{MR2854603} and algorithms development 
\cite{MR2858340,MR3570279}.

\begin{lemma}\label{lm:rational_expression_of_residuals}
	Suppose relations \eqref{eqn:generalized_nonsymmetric_Lanczos} hold.
	Set
	$g_b(\mathfrak{s}):=\Lambda(\mathfrak{s})/\varphi(\mathfrak{s})$ and 
	$g_c(\mathfrak{s}):=\Lambda(\mathfrak{s})/\psi(\mathfrak{s}),$
	where $\Lambda(\lambda)$ is the monic characteristic polynomial of  $\widehat T_l=\widehat W^HA\widehat V$.
	Then, it holds that 
\begin{equation*}
\begin{aligned}
\gamma_b(\lambda,\mathfrak{s})&=\frac{g_b(\lambda)}{g_b(\mathfrak{s})},\quad& r_b&=\gamma_b(A,\mathfrak{s})b, \\
\gamma_c(\lambda,\mathfrak{s})&=\frac{g_c(\lambda)}{g_c(\mathfrak{s})},\quad &r_c^H&=c^H\gamma_c(A,\mathfrak{s}).\\
\end{aligned}
\end{equation*}
\end{lemma}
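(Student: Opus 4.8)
The plan is to mirror the argument of Lemma~\ref{lm:Krylov_rb_rc} from \emph{Step 1}, but now working inside the transplanted Krylov subspaces $\Krylov(A,\varphi(A)^{-1}b,l+1)$ and $\Krylov(A^H,\psi(A)^{-H}c,l+1)$ coming from \eqref{eq:CK_and_Krylov_subspaces_relation}. First I would record the two facts that make the Step~1 machinery applicable: the vectors $\widehat v_1=\varphi(A)^{-1}b$, $\widehat w_1=\psi(A)^{-H}c$ generate these Krylov spaces, and the Lanczos biorthogonalization relations \eqref{eqn:generalized_nonsymmetric_Lanczos} are exactly the relations \eqref{eqn:Krylov_lanczos} with $(b,c,m)$ replaced by $(\widehat v_1,\widehat w_1,l)$. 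In particular Lemma~\ref{lm:Krylov_Cayley} applies verbatim to this triple.

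Next I would identify $r_b$ geometrically. By the Petro--Galerkin conditions \eqref{eqn:x_b_in_CK_projection}, $r_b\in\CK(A,b,k_b,m_b+1)=\Krylov(A,\widehat v_1,l+1)$ and $r_b\bot\CK(A^H,c,k_c,m_c)=\Krylov(A^H,\widehat w_1,l)$. Applying Lemma~\ref{lm:Krylov_Cayley} to $(A,\widehat v_1,\widehat w_1)$ gives $r_b=\rho\,\Lambda(A)\widehat v_1=\rho\,\Lambda(A)\varphi(A)^{-1}b$ for some scalar $\rho=\rho(z)$, where $\Lambda(\lambda)=\det(\lambda I-\widehat T_l)$ is the monic characteristic polynomial of $\widehat T_l=\widehat W^HA\widehat V$. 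Writing this as $r_b=\gamma_b(A,z)b$ with $\gamma_b(\lambda,z)=\rho(z)\,\Lambda(\lambda)/\varphi(\lambda)=\rho(z)\,g_b(\lambda)$, and then imposing the constraint $\gamma_b(z,z)=1$ (which still holds here, as noted just before this subsection), forces $\rho(z)=1/g_b(z)$. Hence $\gamma_b(\lambda,z)=g_b(\lambda)/g_b(z)$ and $r_b=\gamma_b(A,z)b$, as claimed. The computation for $r_c$ is the transpose/adjoint of this one, using that $A^H$ with starting vector $\psi(A)^{-H}c$ has monic characteristic Ritz polynomial $\overline{\Lambda}$ (equivalently $\widehat K_l=\widehat T_l^H$ so the Ritz values are shared), giving $r_c^H=c^H\gamma_c(A,z)$ with $\gamma_c(\lambda,z)=g_c(\lambda)/g_c(z)$.

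One technical point to check carefully, rather than an obstacle, is well-definedness: I must make sure $g_b(\lambda)=\Lambda(\lambda)/\varphi(\lambda)$ really is (the numerator of) a bona fide element of $\mathbb{P}_l(\lambda)/\varphi(\lambda)$ matching the rational-polynomial class already declared for $\gamma_b(\lambda,z)$, and that $r_b=\rho\Lambda(A)\varphi(A)^{-1}b$ indeed lies in $\Krylov(A,\varphi(A)^{-1}b,l+1)$ — this is immediate since $\Lambda$ has degree $l$, so $\Lambda(A)\widehat v_1\in\span(\widehat v_1,A\widehat v_1,\dots,A^l\widehat v_1)$. The genuinely delicate part of the whole \emph{Step~2} is the hypothesis management: I am invoking Lemma~\ref{lm:Krylov_Cayley}, whose conclusion requires $r_b\neq 0$ and the non-breakdown of the Lanczos biorthogonalization up to step $l+1$; the former holds generically (and the statement is understood in that regime), the latter is precisely the standing assumption \eqref{eqn:generalized_nonsymmetric_Lanczos}. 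So the proof is essentially a change-of-basis bookkeeping on top of Lemmas~\ref{lm:Krylov_Cayley} and~\ref{lm:Krylov_rb_rc}; I expect no real difficulty beyond keeping the $\varphi$, $\psi$ bookkeeping straight and handling the adjoint side symmetrically.
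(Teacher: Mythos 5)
Your proposal is correct and follows essentially the same route as the paper: translate the Petrov--Galerkin conditions into the transplanted Krylov subspaces generated by $\varphi(A)^{-1}b$ and $\psi(A)^{-H}c$, invoke Lemma~\ref{lm:Krylov_Cayley} to get $r_b=\rho\,\Lambda(A)\varphi(A)^{-1}b$, and pin down $\rho(z)=1/g_b(z)$ via the constraint $\gamma_b(z,z)=1$. The extra remarks on well-definedness and non-breakdown are sound but not a departure from the paper's argument.
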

\begin{proof}
 After translating \eqref{eqn:x_b_in_CK_projection}
  into the standard Krylov subspace language with \eqref{eq:CK_and_Krylov_subspaces_relation}, we obtain
\begin{equation*}
\begin{aligned}
&x_b\in \Krylov(A,b,l), 
\qquad  r_b=b-(\mathfrak{s}I-A)x_b \in \Krylov(A,\varphi(A)^{-1}b,l+1), \\
s.t. &\quad  r_b \bot \Krylov(A^H,\psi(A)^{-H}c,l). \\
\end{aligned}
\end{equation*}

By Lemma \ref{lm:Krylov_Cayley}, we directly obtain
$r_b=\rho \Lambda(A)[\varphi(A)^{-1}b]=\rho g_b(A)b,$
where $\Lambda(\lambda)$  is  the monic characteristic polynomial of  $\widehat T_{l}$.
As $\mathfrak{s}$ varies, so does the coefficient $\rho$. Thus, we can regard $\rho$ as a function of the  variable $\mathfrak{s}$. With a new symbol  $\rho(\mathfrak{s}),$ we obtain
\begin{equation*}
\begin{aligned}
\gamma_b(\lambda,\mathfrak{s})= \rho(\mathfrak{s}) \frac{\Lambda(\lambda) }{\varphi(\lambda)}=\rho(\mathfrak{s})g_b(\lambda).
\end{aligned}
\end{equation*}

Given  the constraint condition $\gamma_b(\mathfrak{s},\mathfrak{s})=1$, it follows that $\rho(\mathfrak{s})=1/g_b(\mathfrak{s}).$
Hence, $\gamma_b(\lambda,\mathfrak{s})=g_b(\lambda)/g_b(\mathfrak{s}).$
Similarly, we shall obtain the result about $r_c$.
%
\end{proof}

\begin{remark}\label{rmk:solution_expression}
By   relation \eqref{eqn:relation_r_b_x_b_CK_case}, 
we   obtain a new expression of  $\tilde{h}(\mathfrak{s})=c^Hx_b=x_c^Hb:$
\begin{equation*}
\begin{aligned}
\tilde{h}(\mathfrak{s})&=c^H\chi_b(A,\mathfrak{s})b=c^H\chi_c(A,\mathfrak{s})b,\\
\chi_b(\lambda,\mathfrak{s})&=\frac{1-\gamma_b(\lambda,\mathfrak{s})}{\mathfrak{s}-\lambda},
\qquad
\chi_c(\lambda,\mathfrak{s})=\frac{1-\gamma_c(\lambda,\mathfrak{s}) }{\mathfrak{s}-\lambda}.\\
\end{aligned}
\end{equation*}
\end{remark}

\begin{theorem}\label{thm:general_subspace_speical_basis}
Suppose the Lanczos biorthogonalization procedure of $\left[A,\varphi(A)^{-1}b, \psi(A)^{-H}c\right]$ does not break down until
$(l+1)$th iteration.
The formed matrices satisfy \eqref{eqn:generalized_nonsymmetric_Lanczos}. 
Then, it holds that
\begin{equation*}
\begin{aligned}
h(\mathfrak{s})-\tilde{h}(\mathfrak{s})
&=c^H(\mathfrak{s}I-A)^{-1}b-c^H\widehat V(\mathfrak{s}I-\widehat W^HA\widehat V)^{-1}\widehat W^Hb\\
&=r_c^H(\mathfrak{s}I-A)^{-1}r_b\\
&=\frac{1}{g_b(\mathfrak{s})g_c(\mathfrak{s})}c^Hg_c(A)(\mathfrak{s}I-A)^{-1}g_b(A)b,\\
\end{aligned}
\end{equation*}
where $\Lambda(\lambda)$ is the monic characteristic polynomial of  $\widehat T_l=\widehat W^HA\widehat V$.
\end{theorem}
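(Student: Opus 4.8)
The plan is to run the same argument used for Theorem~\ref{thm:arnoldi_speical_basis_result}, but now with the biorthonormal bases $\widehat V,\widehat W$ from the Lanczos biorthogonalization procedure of $[A,\varphi(A)^{-1}b,\psi(A)^{-H}c]$ and the residual formulas of Lemma~\ref{lm:rational_expression_of_residuals}. First I would record the two elementary identities obtained by multiplying the residual definitions $r_b=b-(zI-A)x_b$ and $r_c=c-(zI-A)^Hx_c$ by $(zI-A)^{-1}$ (legitimate for $z\notin\eig(A)$), namely
\begin{equation*}
c^H(zI-A)^{-1}b-c^Hx_b=c^H(zI-A)^{-1}r_b,
\qquad
c^H(zI-A)^{-1}=r_c^H(zI-A)^{-1}+x_c^H.
\end{equation*}
Since the construction in Section~\ref{section:special_basis} gives $\tilde h(z)=c^Hx_b$, the left identity already yields $e(z)=h(z)-\tilde h(z)=c^H(zI-A)^{-1}r_b$.

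Next I would invoke the Petrov--Galerkin conditions \eqref{eqn:x_b_in_CK_projection}: $x_c\in\CK(A^H,c,k_c,m_c)$ while $r_b\bot\CK(A^H,c,k_c,m_c)$, so that $x_c^Hr_b=0$. Substituting the second identity above into $e(z)=c^H(zI-A)^{-1}r_b$ then gives $e(z)=r_c^H(zI-A)^{-1}r_b+x_c^Hr_b=r_c^H(zI-A)^{-1}r_b$, which is the middle expression of the statement. Finally I would insert the closed forms from Lemma~\ref{lm:rational_expression_of_residuals}, $r_b=\gamma_b(A,z)b=g_b(A)b/g_b(z)$ and $r_c^H=c^H\gamma_c(A,z)=c^Hg_c(A)/g_c(z)$, and pull the scalars $g_b(z),g_c(z)$ out of the matrix product to obtain
\begin{equation*}
e(z)=\frac{1}{g_b(z)g_c(z)}\,c^Hg_c(A)(zI-A)^{-1}g_b(A)b.
\end{equation*}

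The substantive content is not in this theorem itself but in the preceding two lemmas: Lemma~\ref{lm:rational_expression_of_residuals} (that $r_b$ is forced to be a scalar multiple of $g_b(A)b$, via the Cayley--Hamilton-type argument of Lemma~\ref{lm:Krylov_Cayley} applied to $\Krylov(A,\varphi(A)^{-1}b,l+1)$, with the scalar pinned down by the normalization $\gamma_b(z,z)=1$) and the identification of $\widehat T_l=\widehat W^HA\widehat V$ as the reduced matrix whose monic characteristic polynomial is $\Lambda$. Given those, the present proof is pure bookkeeping. The one place that demands care is tracking \emph{which} space $r_b$ lies in versus \emph{which} space it is orthogonal to, since any mismatch would destroy the cancellation $x_c^Hr_b=0$; and one should also note the genericity hypotheses ($z\notin\eig(A)$, $g_b(z)g_c(z)\neq 0$, $zI-\widehat W^HA\widehat V$ nonsingular) under which every inverse written above is defined.
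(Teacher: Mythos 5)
Your proposal is correct and follows essentially the same route as the paper: the paper's proof is exactly "substitute Lemma~\ref{lm:rational_expression_of_residuals} into $e(z)=r_c^H(zI-A)^{-1}r_b$", with that identity obtained by the same residual/orthogonality bookkeeping you spell out (carried over from the proof of Theorem~\ref{thm:arnoldi_speical_basis_result}). Your explicit derivation of $x_c^Hr_b=0$ and your remarks on the genericity hypotheses are slightly more detailed than the paper's one-line justification, but the substance is identical.
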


The proof is straightforward.  By substituting the result of Lemma  \ref{lm:rational_expression_of_residuals} into 
$e(\mathfrak{s})=r_c^H(\mathfrak{s}I-A)^{-1}r_b$, we obtain the desired result.
The assumption $c^H\psi(A)^{-1}\varphi(A)^{-1}b=1$
in  \eqref{eqn:generalized_nonsymmetric_Lanczos} also does not affect the result. 
The discussion is  analogous to an earlier discussion of 
$c^Hb=1$ for Theorem \ref{thm:arnoldi_speical_basis_result}, as stated in Remark \ref{rmk:assumption_cb_equal_1}.
In essence, we obtain the  explicit error formula by utilizing   special bases $\widehat V$ and $\widehat W$ of  general subspaces $\CK(A,b,k_b,m_b)$ and $\CK(A^H,c,k_c,m_c)$.

\subsection{{\it Step 3:} General subspaces and general bases}\label{subsection:main_result_proof}
We give the final  proof of Theorem \ref{thm:main_result_1}.

\begin{proof}
Let $\widehat{V}$ and $\widehat{W}$
be the special bases used in 
Theorem \ref{thm:arnoldi_speical_basis_result}.
They satisfy 
$\span(\widehat{V}_l)=\CK(A,b,k_b,m_b),$ 
$\span(\widehat{W}_l)=\CK(A^H,c,k_c,m_c)$ and 
$\widehat{W}^H\widehat{V}=I.$

Because of $\span(V)=\CK(A,b,k_b,m_b)$ and $\span(W)=\CK(A^H,c,k_c,m_c)$,
we obtain the relations $V=\widehat{V}R$
and $W=\widehat{W}S,$
%
where both $R$ and $S$ are  nonsingular transformation matrices.
Then, it holds that
\begin{equation*}\label{eqn:h_tilde_transformation}
\begin{aligned}
\tilde{h}(\mathfrak{s})&=c^HV(\mathfrak{s}W^HV-W^HAV)^{-1}W^Hb=c^H\widehat{V}R( zS^H\widehat{W}^H\widehat{V}R-S^H\widehat{W}^HA\widehat{V}R)^{-1}S^H\widehat{W}^Hb\\
&=c^H\widehat{V}( z\widehat{W}^H\widehat{V}-\widehat{W}^HA\widehat{V})^{-1}\widehat{W}^Hb
=c^H\widehat{V}( zI-\widehat{W}^HA\widehat{V})^{-1}\widehat{W}^Hb.
\end{aligned}
\end{equation*}

By Theorem~\ref{thm:general_subspace_speical_basis}, we obtain
\begin{equation*}
\begin{aligned}
h(\mathfrak{s})-\tilde{h}(\mathfrak{s})
=\frac{1}{g_b(\mathfrak{s})g_c(\mathfrak{s})}c^Hg_c(A)(\mathfrak{s}I-A)^{-1}g_b(A)b,
\end{aligned}
\end{equation*}
where $\lambda_i (i=1,2,\ldots, l)$ are eigenvalues of $\widehat{W}^HA\widehat{V}.$
In this formula, the only elements related to the bases $\widehat{V}$ and $\widehat{W}$ are the eigenvalues.
Let us now see how to relate these eigenvalues to the bases $V$ and $W$.
Obviously, we have
$\eig(\widehat{W}^HA\widehat{V})=\eig(S^{-H}W^HAVR^{-1})
=\eig((S^HR)^{-1}W^HAV)=\eig((W^HV)^{-1}W^HAV).$
%
\end{proof}



Researchers have already utilized the rational biorthogonal bases to design MOR
\cite{MR3473700,MR3592472,frangos2007adaptive,MR2501562}.
For a theoretical analysis of rational biorthogonal bases, see \cite{MR4040919}.
Since the error formula is independent of the bases, 
 it is recommended that  both $V$ and $W$ be orthonormal in practical  implementation. They satisfy
 $V_l^HV_l=I,  W_l^HW_l=I$ and
 $W_l^HV_l \neq I,$ where 
 $\span(V_l)=\CK(A,b,k_b,m_b)$ 
 and $\span(W_l)=\CK(A^H,c,k_c,m_c).$
%
%
To obtain the orthonormal bases,
we can apply Arnoldi-like procedure onto bases (\ref{eqn:cks_basis_b}). 
If the shifts sets $\mathbb{T}$ and $\mathbb{S}$ are given,  a more convenient approach 
is to call  \emph{A Rational Krylov Toolbox for MATLAB} \cite{berljafa2014rational}.


With  the new notation 
\begin{equation}\label{eqn:G_two_form}
\begin{aligned}
G_{\rm two}(\lambda)
:=g_b(\lambda)g_c(\lambda)=\frac{\Lambda(\lambda)}{\varphi(\lambda)}\frac{\Lambda(\lambda)}{\psi(\lambda)},
\end{aligned}
\end{equation}
the error formula is simplified as follows.
\begin{equation}\label{eqn:G_two_error_formula_without_E}
\begin{aligned}
e(\mathfrak{s})=h(\mathfrak{s})-\tilde{h}(\mathfrak{s})
&=\frac{1}{G_{\rm two}(\mathfrak{s})}c^H(\mathfrak{s}I-A)^{-1}G_{\rm two}(A)b.
\end{aligned}
\end{equation}

\subsection{Two further explanations for the MOR  error}\label{sect:two_proofs_by_Hermitian formula}
The explicit identification of the error formula allows for the deeper comprehension of the fundamental characteristics of the MOR error. 
We discover  two further explanations for the MOR error, which are closely related to the two parameters of the resolvent function $\mathcal {H}(\lambda,\mathfrak{s})=1/(\mathfrak{s}-\lambda)$. 
The MOR error can be classified as either an interpolation remainder with respect to variable $\mathfrak{s}$ or a quadrature remainder with respect to $\lambda$.
In order to provide a concise presentation, the demonstration will primarily focus on how the terms within MOR correspond to those within the classical interpolation (or quadrature) theory.
The omitted proofs can be located in the preceding version of the manuscript \cite{2verlin2021transferfunctioninterpolationremainder}.

In interpolation theory, the Lagrange-type remainder can be applied if all of the nodes are real (e.g.\cite[Theorem 1]{MR4433120}).
Given that all of the shifts and Ritz values in our problem are complex, it is vital to utilize  alternative theoretical tools.
 Our theoretical tool is the Hermitian remainder or the divided difference remainder.
The Hermitian formula represents the Cauchy integral form  of the interpolation remainder (e.g.\cite[(8)]{MR2241723}\cite[Page 59]{gaier1987lectures}).
After transforming the polynomial function to the rational function \cite[Page 33]{guttel2010rational}, we can derive an  error formula for  rational function interpolation, which is named as the Walsh-Hermite formula in \cite[Page 19]{MR3095912}.

\begin{lemma}\label{lm:hermite_remainder_intergral}
	{\rm (Hermite)}
	 	Suppose the boundary $\Gamma$ of $\Sigma$ consists of finitely many rectifiable Jordan curves 
	with positive orientation relative to $\Sigma$, and suppose $M(\mathfrak{s})$ is analytic in $\Sigma$ and  
	continuous in $\Sigma \cup \Gamma$. 
	Suppose  the interpolation  conditions hold, i.e.,
	$M(\alpha_i)=P_{k-1}(\alpha_i)$ for 
	$\alpha_i \in \Sigma  (i=1,2,\ldots,k) .$
    Write
    $\pi^{\alpha}_k(\mathfrak{s})=\prod_{i=1}^k(\mathfrak{s}-\alpha_i).$
    Then,  it holds that
    \begin{equation*}
    	\begin{aligned}
    		M(\mathfrak{s})-P_{k-1}(\mathfrak{s})
    		=\frac{1}{2\pi \iota}\oint_{\Gamma}\frac{\pi^{\alpha}_k(\mathfrak{s})}{\pi^{\alpha}_k(\zeta)}\frac{M(\zeta)}{\zeta-\mathfrak{s}}d\zeta.\\
    	\end{aligned}
    \end{equation*}
	
\end{lemma}

\begin{lemma}\label{lm:rational_Hermitian_remainder}
{\rm (Walsh-Hermite)}
Let $Q_{k-1,k}(\mathfrak{s})\in \mathbb{Q}_{k-1,k}(\mathfrak{s})$ be a rational  interpolating function  of $N(\mathfrak{s})$  with poles $\beta_i$ and interpolation nodes  $\alpha_i$.
Write 
\begin{equation*}
\begin{aligned}
G^{\alpha,\beta}_k(\mathfrak{s})
:=\prod\limits_{i=1}^k\frac{z-\alpha_i}{z-\beta_i}
=\frac{(\mathfrak{s}-\alpha_1)(\mathfrak{s}-\alpha_2)\cdots(\mathfrak{s}-\alpha_k)}
{(\mathfrak{s}-\beta_1)(\mathfrak{s}-\beta_2)\cdots(\mathfrak{s}-\beta_k)}.
\end{aligned}
\end{equation*}
	 	Suppose the boundary $\Gamma$ of $\Sigma$ consists of finitely many rectifiable Jordan curves 
with positive orientation relative to $\Sigma$, and suppose  $N(\mathfrak{s})$ is analytic in $\Sigma$ and  
continuous in $\Sigma \cup \Gamma$. 
%
If interpolation nodes $\alpha_i (i=1,2,\ldots,k)$  are in $\Sigma$, 
then it holds that
\begin{equation*}
\begin{aligned}
N(\mathfrak{s})-Q_{k-1,k}(\mathfrak{s})
=\frac{1}{2\pi i}\oint_{\Gamma}\frac{G^{\alpha,\beta}_k(\mathfrak{s})}{G^{\alpha,\beta}_k(\zeta)}\frac{N(\zeta)}{\zeta-\mathfrak{s}}d\zeta.\\
\end{aligned}
\end{equation*}

\end{lemma}

\begin{proof}
	Let 
	$\pi^{\beta}_k(\mathfrak{s}):=\prod_{i=1}^k(\mathfrak{s}-\beta_i),$
	$\widetilde N(\mathfrak{s}):=
	N(\mathfrak{s})\pi^{\beta}_k(\mathfrak{s})$
	and $P_{k-1}(\mathfrak{s}):=
	Q_{k-1,k}(\mathfrak{s})\pi^{\beta}_k(\mathfrak{s}).$
	By the rational interpolation condition, we  obtain
	$\widetilde N(\alpha_i)=P_{k-1}(\alpha_i)$ for 
	$\alpha_i \in \Sigma  (i=1,2,\ldots,k) .$
	Thus, Lemma \ref{lm:hermite_remainder_intergral} is applied.
	    \begin{equation}\label{eq:Nzpi_interpolation_intergral}
		\begin{aligned}
			N(\mathfrak{s})\pi^{\beta}_k(\mathfrak{s})-P_{k-1}(\mathfrak{s})=
			\widetilde N(\mathfrak{s})-P_{k-1}(\mathfrak{s})
			=\frac{1}{2\pi \iota}\oint_{\Gamma}\frac{\pi^{\alpha}_k(\mathfrak{s})}{\pi^{\alpha}_k(\zeta)}\frac{\widetilde N(\zeta)}{\zeta-\mathfrak{s}}d\zeta
			=\frac{1}{2\pi \iota}\oint_{\Gamma}\frac{\pi^{\alpha}_k(\mathfrak{s})}{\pi^{\alpha}_k(\zeta)}\frac{N(\zeta)\pi^{\beta}_k(\zeta)}{\zeta-\mathfrak{s}}d\zeta.\\
		\end{aligned}
	\end{equation}

The proof  is completed 
after  our dividing   \eqref{eq:Nzpi_interpolation_intergral} by $\pi^{\beta}_k(\mathfrak{s}).$
\end{proof}

In the next two subsections, 
we apply Lemma  \ref{lm:rational_Hermitian_remainder} onto the resolvent function $\mathcal{H}(\lambda,\mathfrak{s})=1/(\mathfrak{s}-\lambda)$ with respect to  variables $\mathfrak{s}$ and $\lambda$, respectively.
Note that the interpolation nodes and poles of  these  two cases are opposite.
The key step in the following poofs  is the interchange between
the problem resolvent function $\mathcal{H}(\lambda,\mathfrak{s})$ and the resolvent function in the Cauchy integral.
Therefore, if  other problems  (e.g. \cite[Section 4.2, Section 4.3]{MR3095912}) lack a  resolvent function term inside, we may not expect to acquire  an explicit error formula.

%

\subsubsection{Variable $\mathfrak{s}$}
To make use of Lemma \ref{lm:rational_Hermitian_remainder},
we  substitute $h(\mathfrak{s}) \in \mathbb{Q}_{n-1,n}(\mathfrak{s})$ 
and $\tilde{h}(\mathfrak{s})\in \mathbb{Q}_{l-1,l}(\mathfrak{s})$  into $N(\mathfrak{s})$ and $Q_{k-1,k}(\mathfrak{s})$, respectively. 
For simplicity, we assume that
$k_b=k_c=l$ and that all of the 
shifts $t_i,s_i$ (from $\varphi(\lambda),\psi(\lambda)$) are finite.
Consider  a region $\Sigma$ such that $\eig(A)\cup \{\lambda_i\} \subseteq \Sigma$ and $ \{s_i,t_i\} \subseteq \Sigma^{-}$.
We observe that   $\tilde{h}(\mathfrak{s})$  is a rational  interpolating function  of $h(\mathfrak{s})$  with poles $\lambda_i$(doubled) and interpolation nodes $t_i,s_i.$ 
Thus, we substitute 
$G^{\alpha,\beta}_k(\mathfrak{s})=1/G_{\rm two}(\mathfrak{s})$ into Lemma \ref{lm:rational_Hermitian_remainder}, where
 $G_{\rm two}(\mathfrak{s})$ is defined by (\ref{eqn:G_two_form}).
 It is easy to check that 
 $h(\mathfrak{s})$ is analytic on  $\Sigma^{-}.$ By Lemma \ref{lm:rational_Hermitian_remainder},  for $\mathfrak{s}\in \Sigma^{-}$, it holds that
 \begin{equation*}
 	\begin{aligned}
 		h(\mathfrak{s})-\tilde{h}(\mathfrak{s})
 		&=\frac{1}{2\pi \iota}\oint_{\Gamma^{-}}\frac{G^{\alpha,\beta}_k(\mathfrak{s})}{G^{\alpha,\beta}_k(\zeta)}\frac{h(\zeta)}{\zeta-\mathfrak{s}}d\zeta
 		=\frac{1}{2\pi \iota}\oint_{\Gamma^{-}}\frac{G_{two}(\zeta)}{G_{\rm two}(\mathfrak{s})}\frac{c^H(\zeta I-A)^{-1}b}{\zeta-\mathfrak{s}}d\zeta\\
 		&=\frac{1}{G_{\rm two}(\mathfrak{s})}\frac{1}{2\pi \iota}c^H\left[\oint_{\Gamma^{-}}\frac{G_{\rm two}(\zeta)}{\zeta-\mathfrak{s}}(\zeta I-A)^{-1}d\zeta \right]b\\
 		&=\frac{1}{G_{\rm two}(\mathfrak{s})}\frac{1}{2\pi \iota}c^H\left[\oint_{\Gamma}\frac{G_{\rm two}(\zeta)}{\mathfrak{s}-\zeta}(\zeta I-A)^{-1}d\zeta \right]b\\
 		&=\frac{1}{G_{\rm two}(\mathfrak{s})}c^HG_{\rm two}(A)(\mathfrak{s}I-A)^{-1}b.
 	\end{aligned}
 \end{equation*}
In the last equality, we utilize the Cauchy integral definition for  functions of matrices(e.g.\cite[Definition 1.11]{MR2396439}), given that  $G_{\rm two}(\zeta)/(\mathfrak{s}-\zeta)$ with respect to $\zeta$ is analytic on $\Sigma \supseteq \eig(A) \cup \{\lambda_i\}  $.

The following serves to provide an outline of the explanation with respect to variable $\mathfrak{s}$. 
In essence, it can be regarded as an interpolation formula.
The functions $h(\mathfrak{s})$ and $\tilde{h}(\mathfrak{s})$ represent the  interpolated and the interpolating  functions, respectively. 
The  moments matching  theory  states that the interpolation nodes are the shifts $t_i,s_i$ \cite{MR2013459}.
Following the discovery that the order of the poles, $\lambda_i$, is two, it becomes evident that the MOR error  $e(\mathfrak{s})$ can be regarded as the remainder of the interpolation formula.

\subsubsection{Variable $\lambda$}\label{subsect:Hermitian_remainder_lambda}
With respect to the parameter $\lambda$, the initial step will be to construct the classical Hermitian interpolating polynomial, incorporating either the Hermitian remainder or the divided difference remainder.
Subsequently, the Gauss quadrature is employed.
We shall substitute $\widehat{\mathcal{H}}(\lambda)=\varphi(\lambda)\psi(\lambda)/(\mathfrak{s}-\lambda)$ into $\widetilde N(\mathfrak{s})$ in \eqref{eq:Nzpi_interpolation_intergral}. 
Now, the variable of interest  is $\lambda$. 
For simplicity, we assume that there exists  a set $\Sigma$ satisfying $\eig(A)\cup \{\lambda_i\} \subseteq \Sigma$ and $ \{s_i,t_i\} \subseteq \Sigma^{-}$. 
In order to provide a concise explanation, it is assumed that  all of $\{\lambda_i\}$ are distinct.
Otherwise, the Gauss quadrature process is known to become more complicated\cite[(4.1)]{MR4311639}.

\begin{proposition}\label{prop:Hermitian_interpolation_of_lamda}
	Let $\widetilde{\mathcal P}_{2l-1}(\lambda)\in \mathbb{P}_{2l-1}(\lambda)$ be the Hermitian interpolating polynomial of $\widehat{\mathcal{H}}(\lambda)$  on the interpolation nodes   $\lambda_i$ (doubled). 
	Then, the interpolation remainder can be expressed as follows: 
			\begin{eqnarray}
		&	\widehat{\mathcal{H}}(\lambda)-\widetilde{\mathcal P}_{2l-1}(\lambda)
			=[\Lambda(\lambda)]^2\widehat{\mathcal{H}}[\lambda_1,\lambda_1,\lambda_2,\lambda_2,\ldots,\lambda_l,\lambda_l,\lambda], \nonumber \\
		&\widehat{\mathcal{H}}[\lambda_1,\lambda_1,\lambda_2,\lambda_2,\ldots,\lambda_l,\lambda_l,\lambda]
       =\left\{\frac{\varphi(\mathfrak{s})\psi(\mathfrak{s})}{[\Lambda(\mathfrak{s})]^2}\right\}\frac{1}{\mathfrak{s}-\lambda}. \label{eqn:divied_difference_finite}
       \end{eqnarray}
\end{proposition}

\begin{proof}
%
%
%
%
 One of the explicit expressions of  $\widetilde{\mathcal P}_{2l-1}(\lambda)$ is given as follows (e.g.\cite[Theorem 3.9]{burden1993numerical}).
		\begin{eqnarray*}
		\widehat{\mathcal{H}}(\lambda_i)&=&\widetilde{\mathcal P}_{2l-1}(\lambda_i),
		\quad
		\widehat{\mathcal{H}}'(\lambda_i)=\widetilde{\mathcal P}_{2l-1}'(\lambda_i),
		\quad
		i=1,2,\ldots,l,\nonumber\\
		\widetilde{\mathcal P}_{2l-1}(\lambda)
		&=&\sum\limits_{i=1}^{l}
		\widehat{\mathcal{H}}(\lambda_i)
		\{\widehat{\mathcal L}_i^2(\lambda)[1-2(\lambda-\lambda_i)]\widehat{\mathcal L}_i'(\lambda)\}
		+\sum\limits_{i=1}^{l}
		\widehat{\mathcal{H}}'(\lambda_i)
		[(\lambda-\lambda_i)\widehat{\mathcal L}_i^2(\lambda)],
		\nonumber
		\end{eqnarray*}
where $\widehat{\mathcal L}_i(\lambda)$ is the Lagrange basis polynomial 
\begin{equation*}
	\begin{aligned}
		\widehat{\mathcal L}_i(\lambda)
		=\prod\limits_{j=1,j\neq i}^l\frac{\lambda-\lambda_j}{\lambda_i-\lambda_j}.
	\end{aligned}
\end{equation*}
This remainder formula is, in fact, a quantity equality, which permits a relatively straightforward proof.
A proof based on the meanings of the divided difference can be found in  \cite[Lemma 2.15 and (2.24)]{2verlin2021transferfunctioninterpolationremainder}.
%
%
%

An additional proof is provided by using Lemma~\ref{lm:hermite_remainder_intergral}.
In this proof, an observation is made regarding the interchange between the problem resolvent function  $\mathcal{H}(\lambda,\mathfrak{s})$ and the resolvent function in the Cauchy integral.
\begin{equation}\label{eqn:H_hat_lambda_cauchy_integration}
\begin{aligned}
&\widehat{\mathcal{H}}(\lambda)-\widetilde{\mathcal P}_{2l-1}(\lambda)
=\frac{1}{2\pi \iota}\oint_{\Gamma}\frac{[\Lambda(\lambda)]^2}{[\Lambda(\zeta)]^2}
\frac{\widehat{\mathcal{H}}(\zeta)}{\zeta-\lambda}d\zeta
=[\Lambda(\lambda)]^2\frac{1}{2\pi \iota}\oint_{\Gamma}\frac{1}{[\Lambda(\zeta)]^2}
\frac{\varphi(\zeta)\psi(\zeta)}{\mathfrak{s}-\zeta}\frac{1}{\zeta-\lambda}d\zeta\\
&=[\Lambda(\lambda)]^2\frac{1}{2\pi \iota}\oint_{\Gamma}\frac{1}{[\Lambda(\zeta)]^2}
\frac{\varphi(\zeta)\psi(\zeta)}{\lambda-\zeta}\frac{1}{\zeta-\mathfrak{s}}d\zeta
=[\Lambda(\lambda)]^2\frac{1}{2\pi \iota}\oint_{\Gamma^{-}}\left\{\frac{1}{[\Lambda(\zeta)]^2}
\frac{\varphi(\zeta)\psi(\zeta)}{\zeta-\lambda}\right\}\frac{1}{\zeta-\mathfrak{s}}d\zeta\\
&=[\Lambda(\lambda)]^2\frac{1}{[\Lambda(\mathfrak{s})]^2}
\frac{\varphi(\mathfrak{s})\psi(\mathfrak{s})}{\mathfrak{s}-\lambda}
=[\Lambda(\lambda)]^2\widehat{\mathcal{H}}[\lambda_1,\lambda_1,\lambda_2,\lambda_2,\ldots,\lambda_l,\lambda_l,\lambda].\\
\end{aligned}
\end{equation}
The Cauchy integral formula is employed in  the penultimate equality,
given that the function in the braces
with respect to $\zeta$ is analytic on $\Sigma^{-}$.
%
%
%
%
\end{proof}

Once the interpolation phase is complete, the Gauss  quadrature stage begins, which is now expressed as a linear functional.
We begin by presenting two propositions pertaining to the classical result in Gauss quadrature.



\begin{proposition}\label{thm:moment_mathcing_weighted}
	Suppose   \eqref{eqn:generalized_nonsymmetric_Lanczos} hold.
	Then, it holds that
	$c^H\psi(A)^{-1}\mathcal {P}(A)\varphi(A)^{-1}b=e_1^H\mathcal {P}(\widehat{T}_l)e_1 $ for any $\mathcal {P}(\lambda)\in \mathbb{P}_{2l-1}$.
\end{proposition}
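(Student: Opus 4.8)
The plan is to mimic the argument that established Proposition~\ref{thm:infnty_moment_mathcing} in the unshifted case, but now carried out inside the standard Krylov subspace $\Krylov(A,\varphi(A)^{-1}b,l+1)$ on which the Lanczos biorthogonalization of $[A,\varphi(A)^{-1}b,\psi(A)^{-H}c]$ was run. Concretely, set $\widehat b:=\varphi(A)^{-1}b$ and $\widehat c:=\psi(A)^{-H}c$, so that \eqref{eqn:generalized_nonsymmetric_Lanczos} is exactly the system \eqref{eqn:Krylov_lanczos} with $(A,b,c)$ replaced by $(A,\widehat b,\widehat c)$ and $m$ replaced by $l$, and $\widehat c^H\widehat b=1$. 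Then Proposition~\ref{thm:infnty_moment_mathcing} (i.e.\ \cite[Theorem~2]{Freund1993}) applies verbatim and yields
\begin{equation*}
\widehat c^H\mathcal {P}(A)\widehat b=e_1^H\mathcal {P}(\widehat T_l)e_1\qquad\text{for all }\mathcal {P}(\lambda)\in\mathbb{P}_{2l-1}.
\end{equation*}
It remains only to substitute $\widehat b=\varphi(A)^{-1}b$ and $\widehat c^H=c^H\psi(A)^{-1}$ on the left-hand side, which is immediate because $\varphi(A)^{-1}$, $\psi(A)^{-1}$ and $\mathcal {P}(A)$ are all polynomials (or inverses of polynomials) in $A$ and hence commute: $\widehat c^H\mathcal {P}(A)\widehat b=c^H\psi(A)^{-1}\mathcal {P}(A)\varphi(A)^{-1}b$. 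This gives exactly the claimed identity.

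An alternative, more self-contained route — in case one does not wish to quote \cite{Freund1993} — is to reprove the moment-matching fact directly in the weighted setting. Split an arbitrary $\mathcal {P}\in\mathbb{P}_{2l-1}$ via polynomial division by the monic characteristic polynomial $\Lambda$ of $\widehat T_l$ as $\mathcal {P}=q\,\Lambda+s$ with $\deg q\le l-1$ and $\deg s\le l-1$. For the remainder part one uses that $s(A)\widehat b=\widehat V_l\,s(\widehat T_l)e_1$ (Lemma~\ref{lm:polynomial_expression_taub} applied to $(A,\widehat b,\widehat c)$), so $\widehat c^H s(A)\widehat b=\widehat w_1^H\widehat V_l s(\widehat T_l)e_1=e_1^H s(\widehat T_l)e_1$ using $\widehat W_l^H\widehat V_l=I$; and since $\Lambda(\widehat T_l)=0$ one also has $e_1^H s(\widehat T_l)e_1=e_1^H\mathcal {P}(\widehat T_l)e_1$. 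For the quotient part one must show $\widehat c^H q(A)\Lambda(A)\widehat b=0$: indeed $\Lambda(A)\widehat b$ is (a scalar multiple of) the Lanczos residual $\widehat v_{l+1}$ up to lower-order terms, and more to the point $\Lambda(A)\widehat b\perp\Krylov(A^H,\widehat c,l)=\span(\widehat W_l)$ by Lemma~\ref{lm:Krylov_Cayley} (run on $(A,\widehat b,\widehat c)$); since $q(A^H)\widehat c\in\span(\widehat W_l)$ when $\deg q\le l-1$, the pairing vanishes. Adding the two contributions yields the identity, and then the substitution $\widehat b=\varphi(A)^{-1}b$, $\widehat c^H=c^H\psi(A)^{-1}$ finishes the proof as before.

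There is essentially no hard part here: the proposition is the $2l-1$-degree moment-matching statement for the Lanczos biorthogonalization of the \emph{rationally weighted} starting vectors, and it follows from the corresponding unweighted statement (Proposition~\ref{thm:infnty_moment_mathcing}) purely by relabeling, because the weights $\varphi(A)^{-1}$ and $\psi(A)^{-H}$ commute with everything in sight and can be absorbed into $b$ and $c$. The only point that deserves a sentence of care is the normalization hypothesis embedded in \eqref{eqn:generalized_nonsymmetric_Lanczos}, namely $\widehat w_1^H\widehat v_1=c^H\psi(A)^{-1}\varphi(A)^{-1}b=1$; as already noted for Theorem~\ref{thm:arnoldi_speical_basis_result} in Remark~\ref{rmk:assumption_cb_equal_1}, a nonunit value $m_0$ is handled by scaling $\widehat b\mapsto\widehat b/m_0$ and multiplying the result back by $m_0$, so the statement is unaffected. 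I would present the short first route in the main text and relegate the self-contained expansion to at most a footnote or an omitted-details remark, since it parallels Lemma~\ref{lm:Krylov_rb_rc} and the comments after Proposition~\ref{thm:infnty_moment_mathcing} almost word for word.
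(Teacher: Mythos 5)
Your first route is exactly the paper's proof: the paper also observes that \eqref{eqn:generalized_nonsymmetric_Lanczos} is the Lanczos biorthogonalization of $(A,\varphi(A)^{-1}b,\psi(A)^{-H}c)$ and applies Proposition~\ref{thm:infnty_moment_mathcing} verbatim to those data. The self-contained second route and the normalization remark are fine but redundant with what the paper already records elsewhere.
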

\begin{proof}
	By (\ref{eqn:generalized_nonsymmetric_Lanczos}), the relating matrices are formed by  the Lanczos biorthogonalization procedure of $A$, $\varphi(A)^{-1}b$ and  $\psi(A)^{-H}c$.
	Accordingly, the proof is completed by means of a direct application of the classical result (\! \cite[Theorem 2]{Freund1993}) onto $A, \varphi(A)^{-1}b$ and  $\psi(A)^{-H}c.$
	%
	%
\end{proof}

\begin{proposition}\label{th:algebraic_precision}
	Suppose relations \eqref{eqn:generalized_nonsymmetric_Lanczos}
	hold. Then, it holds that $c^H\mathcal{Q}(A)b=c^H\widehat V\mathcal{Q}(\widehat W^HA\widehat V)\widehat W^Hb$
	for any  ${\mathcal{Q}(\lambda)\in \mathbb{P}_{2l-1}(\lambda)/[\varphi(\lambda)\psi(\lambda)}]$.
\end{proposition}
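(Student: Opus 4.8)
The plan is to reduce both sides to the single scalar $e_1^H\mathcal{P}(\widehat T_l)e_1$, where $\mathcal{P}\in\mathbb{P}_{2l-1}(\lambda)$ is the numerator of $\mathcal{Q}$, i.e.\ $\mathcal{Q}(\lambda)=\mathcal{P}(\lambda)/\bigl(\varphi(\lambda)\psi(\lambda)\bigr)$. For the left-hand side, since polynomials and rational functions of the single matrix $A$ commute, one has $c^H\mathcal{Q}(A)b=\bigl(\psi(A)^{-H}c\bigr)^H\mathcal{P}(A)\bigl(\varphi(A)^{-1}b\bigr)=\widehat w_1^H\mathcal{P}(A)\widehat v_1$, and this is exactly $e_1^H\mathcal{P}(\widehat T_l)e_1$ by Proposition~\ref{thm:moment_mathcing_weighted} (equivalently, by Proposition~\ref{thm:infnty_moment_mathcing} applied to the triple $(A,\widehat v_1,\widehat w_1)$, which is legitimate because $\deg\mathcal{P}\le 2l-1$).

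For the right-hand side, write $\widehat V=\widehat V_l$, $\widehat W=\widehat W_l$ and recall $\widehat W^HA\widehat V=\widehat T_l$, so that $\mathcal{Q}(\widehat W^HA\widehat V)=\varphi(\widehat T_l)^{-1}\psi(\widehat T_l)^{-1}\mathcal{P}(\widehat T_l)$ — well defined precisely under the standing assumption that $\mathcal{Q}(\widehat W^HA\widehat V)$ makes sense, i.e.\ no shift $s_j$ or $t_j$ is a Ritz value. The heart of the argument is the pair of basis-transfer identities
\[
\widehat W_l^H b=\varphi(\widehat T_l)e_1,\qquad c^H\widehat V_l=e_1^H\psi(\widehat T_l).
\]
To obtain the first, use $b=\varphi(A)\widehat v_1$ and expand $\varphi(A)\widehat v_1$ through the Lanczos relations \eqref{eqn:generalized_nonsymmetric_Lanczos} exactly as in Lemma~\ref{lm:polynomial_expression_taub}: this gives $b=\widehat V_l\varphi(\widehat T_l)e_1$ plus, only in the extreme case $\deg\varphi=k_b=l$, a scalar multiple of $\widehat v_{l+1}$; left-multiplying by $\widehat W_l^H$ and using $\widehat W_l^H\widehat V_l=I$ together with $\widehat W_l^H\widehat v_{l+1}=0$ yields $\widehat W_l^Hb=\varphi(\widehat T_l)e_1$. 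The identity for $c$ is obtained in the same manner from $c=\psi(A)^H\widehat w_1=\bar\psi(A^H)\widehat w_1$, invoking the $A^H$-side of \eqref{eqn:generalized_nonsymmetric_Lanczos} (where $\widehat K_l=\widehat T_l^H$) and $\widehat V_l^H\widehat w_{l+1}=0$, and then taking the conjugate transpose.

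Putting these together,
\[
c^H\widehat V\,\mathcal{Q}(\widehat W^HA\widehat V)\,\widehat W^Hb
=e_1^H\psi(\widehat T_l)\,\varphi(\widehat T_l)^{-1}\psi(\widehat T_l)^{-1}\mathcal{P}(\widehat T_l)\,\varphi(\widehat T_l)e_1
=e_1^H\mathcal{P}(\widehat T_l)e_1,
\]
since all functions of the single matrix $\widehat T_l$ commute. This coincides with the left-hand side, which proves the proposition. I expect the only delicate point to be the bookkeeping in the two basis-transfer identities — specifically, checking that the leading-degree correction term along $\widehat v_{l+1}$ (resp.\ $\widehat w_{l+1}$) is annihilated after projecting with $\widehat W_l^H$ (resp.\ $\widehat V_l^H$), and tracking the conjugation that enters through $\psi(A)^H=\bar\psi(A^H)$ on the $c$-side; beyond that, the argument is just commutativity of rational functions of one matrix.
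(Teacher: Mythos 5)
Your proposal is correct and follows essentially the same route as the paper: reduce both sides to $e_1^H\mathcal{P}(\widehat T_l)e_1$ via Proposition \ref{thm:moment_mathcing_weighted}, using the basis-transfer identities $\widehat W_l^Hb=\varphi(\widehat T_l)e_1$ and $c^H\widehat V_l=e_1^H\psi(\widehat T_l)$, which the paper derives exactly as you do (the Lemma \ref{lm:polynomial_expression_taub} expansion, with the $\widehat v_{l+1}$ correction killed by $\widehat W_l^H\widehat v_{l+1}=0$); these are the paper's equations \eqref{eqn:e1_b_right} and \eqref{eqn:e1_c_left}. The delicate points you flag (the leading-degree term when $k_b=l$ and the conjugation on the $c$-side) are precisely the ones the paper also attends to.
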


\begin{proof}
	Similar to Lemma \ref{lm:polynomial_expression_taub}, 
	for the polynomial $\grave{\tau}_j(\lambda):=\grave{a}_j^j\lambda^j+\cdots+\grave{a}_1^j\lambda+\grave{a}_0^j,$ we obtain 
	\begin{equation*}
		\left\{
		\begin{aligned}
			\grave{\tau}_j(A)\varphi(A)^{-1}b&=\widehat{V}_l\grave{\tau}_j(\widehat{T}_l)e_1, \quad& j<l,\\
			\grave{\tau}_l(A)\varphi(A)^{-1}b&=\widehat{V}_l\grave{\tau}_l(\widehat{T}_l)e_1+\grave{a}_l^l\grave{\zeta}_l\widehat{v}_{l+1},& j=l,
		\end{aligned}
		\right.
	\end{equation*}
	with $	\grave{\zeta}_l=\widehat \gamma_{l+1}\cdots \widehat \gamma_3 \widehat \gamma_2.$
	Setting $\grave{\tau}_j(\lambda)=\varphi_{k_b}(\lambda),$ we obtain the following result:
	\begin{equation*}
		\left\{
		\begin{aligned}
			\varphi_{k_b}(A)\varphi_{k_b}(A)^{-1}b&=\widehat{V}_l\varphi_{k_b}(\widehat{T}_l)e_1,  & & k_b<l,\\
			\varphi_{k_b}(A)\varphi_{k_b}(A)^{-1}b&=\widehat{V}_l\varphi_{k_b}(\widehat{T}_l)e_1+\grave{a}_l\grave{\zeta}_l\widehat{v}_{l+1},&&k_b=l,m_b=0.\\
		\end{aligned}
		\right.
	\end{equation*}
	Note that $\widehat{W}^H_l\widehat{V}_l=I,\widehat{W}^H_l\widehat{v}_{l+1}=0$
	and $W^H\varphi(A)^{-1}b=e_1.$
	For either $k_b < l$ or $k_b = l$, it holds that 
	\begin{equation}\label{eqn:e1_b_right}
		\begin{aligned}
			\widehat{W}^Hb=\widehat{W}^H\varphi(A)\varphi(A)^{-1}b&=\varphi(\widehat{T}_l)e_1
			=\varphi(\widehat{T}_l)\widehat{W}^H\varphi(A)^{-1}b,\\
			\varphi(\widehat{T}_l)^{-1}\widehat{W}^Hb&=\widehat{W}^H\varphi(A)^{-1}b=e_1.\\
		\end{aligned}
	\end{equation}
	Similarly, we  obtain
	\begin{equation}\label{eqn:e1_c_left}
		\begin{aligned}
			e_1=\psi(\widehat{K}_l)^{-1}\widehat{V}^Hc,
			\qquad
			e_1^H=c^H\widehat V\psi(\widehat{K}_l)^{-H}=c^H\widehat V\psi(\widehat{T}_l)^{-1}.\\
		\end{aligned}
	\end{equation}
	The proof ends by rewriting Proposition \ref{thm:moment_mathcing_weighted}
	with \eqref{eqn:e1_b_right} and  \eqref{eqn:e1_c_left}.
	%
	%
	%
\end{proof}

For simple presentation, we assume that $\widehat m_0=1,$ where 
$\widehat m_0:=c^H\psi(A)^{-1}\varphi(A)^{-1}b.$ If  $\widehat m_0 \neq 1,$ then a similar discussion of Remark \ref{rmk:assumption_cb_equal_1} can be  applied.
In a manner analogous to the treatment in \cite{MR4311639}, the linear functional with a weighted term is defined as follows.
\begin{equation}\label{eqn:int_definition_weighted}
	\begin{aligned}
		\widehat{\mathfrak{I}}(\mathcal {F})&:=c^H\psi(A)^{-1}\mathcal {F}(A)\varphi(A)^{-1}b.\\
	\end{aligned}
\end{equation}

With the bases from (\ref{eqn:generalized_nonsymmetric_Lanczos}), we have the Gauss-Christoffel quadrature formula:
\begin{equation}\label{eqn:Gauss_quad_formula_weighted}
	\begin{aligned}
		\widehat{\mathfrak{I}}(\mathcal {F})&=\widehat{\mathfrak{I}}_G(\mathcal {F})&+&\widehat{\mathfrak{E}}(\mathcal {F}),\\
		\ie, \quad  c^H\psi(A)^{-1}\mathcal {F}(A)\varphi(A)^{-1}b&=e_1^H\mathcal {F}(\widehat{T}_l)e_1&+
		& \widehat{\mathfrak{I}}([\Lambda(\lambda)]^2\mathcal {F}[\lambda_1,\lambda_1,\lambda_2,\lambda_2,\ldots,\lambda_l,\lambda_l,\lambda]).\\
	\end{aligned}
\end{equation}

As Proposition \ref{thm:moment_mathcing_weighted} shows, the equality $\widehat{\mathfrak{I}}(\mathcal {P})=\widehat{\mathfrak{I}}_G(\mathcal {P})$ is satisfied for any $\mathcal {P}(\lambda)\in \mathbb{P}_{2l-1}.$ 
It can thus be surmised that $\widehat{\mathfrak{I}}_G(\mathcal {F})=e_1^H\mathcal {F}(\widehat{T}_l)e_1$ represents the Gauss-Christoffel quadrature formula.
%
%
%
%
%
%
After substituting $\widehat{\mathcal {H}}(\lambda,\mathfrak{s})=\varphi(\lambda)\psi(\lambda)/(\mathfrak{s}-\lambda)$ into $\mathcal {F}(\lambda)$ in (\ref{eqn:Gauss_quad_formula_weighted}), we  obtain the following quadrature remainder formula.
\begin{equation*}
	\begin{aligned}
		c^H\psi(A)^{-1}[\varphi(A)\psi(A)(\mathfrak{s}I-A)^{-1}]\varphi(A)^{-1}b&=e_1^H\psi(\widehat{T}_l)(\mathfrak{s}I-\widehat{T}_l)^{-1}\varphi(\widehat{T}_l)e_1+\widehat{\mathfrak{E}}(\widehat{\mathcal {H}}),\\
		c^H(\mathfrak{s}I-A)^{-1}b&=c^H\widehat{V}(\mathfrak{s}I-\widehat{W}^HA\widehat{V})^{-1}\widehat{W}^Hb+\widehat{\mathfrak{E}}(\widehat{\mathcal {H}}),\\
	\end{aligned}
\end{equation*}
where  $\varphi(\widehat{T}_l)e_1=\widehat{W}^Hb$ and $e_1^H\psi(\widehat{T}_l)=c^H\widehat{V}$ are  described in (\ref{eqn:e1_b_right}) and  (\ref{eqn:e1_c_left}), respectively.  
In conjunction with   \eqref{eqn:divied_difference_finite} and  \eqref{eqn:int_definition_weighted}, we can derive the following form of the MOR error.
\begin{equation}\label{eqn:error_integration_finite}
	\begin{aligned}
		\quad e(\mathfrak{s})=h(\mathfrak{s})-\tilde{h}(\mathfrak{s})=\widehat{\mathfrak{E}}(\widehat{\mathcal {H}})
		&=\widehat{\mathfrak{I}}( [\Lambda(\lambda)]^2\widehat{\mathcal {H}}[\lambda_1,\lambda_1,\lambda_2,\lambda_2,\ldots,\lambda_l,\lambda_l,\lambda])\\
		&=\left\{\frac{\varphi(\mathfrak{s})\psi(\mathfrak{s})}{[\Lambda(\mathfrak{s})]^2}\right\}c^H\psi(A)^{-1}\Lambda(A) (\mathfrak{s}I-A)^{-1}\Lambda(A) \varphi(A)^{-1}b.\\
	\end{aligned}
\end{equation}


We conclude by outlining the explanation with respect to  variable $\lambda$.
Proposition \ref{prop:Hermitian_interpolation_of_lamda} represents the Hermitian interpolation formula.
The integral is analogous to the linear functional with a weighted term \eqref{eqn:int_definition_weighted}.
The Gauss-Christoffel quadrature  formula is presented in  \eqref{eqn:Gauss_quad_formula_weighted}.
The Ritz values $\lambda_i$ concern the interpolation and quadrature nodes. 
Propostion \ref{thm:moment_mathcing_weighted} and Propostion \ref{th:algebraic_precision}
reveal that  Gauss quadrature has degree of precision $(2l-1)$.
Finally, the error formula $e(\mathfrak{s})$ corresponds to the Gauss quadrature remainder.
More details can be found in the  manuscript \cite{2verlin2021transferfunctioninterpolationremainder}.

This section of the research is based on the fact that the Ritz values represent the quadrature nodes of a  Gauss-Christoffel quadrature.
 This leads to the motivation to establish the relationship between the error formula and the moments matching. 
The book \cite{MR3024841} by Liesen and Strako{\v{s}}  provides a comprehensive overview of the relationships among  the symmetric Lanczos procedure ($A=A^H$  with $b=c$), moments matching, orthogonal polynomials, continued fractions and Gauss quadrature. 
Recently, Strako{\v{s}} and his co-workers have a series of work on generalizing it onto  the  Lanczos biorthogonalization procedure 
\cite{MR4311639,MR3671502,MR3871800,MR3868647,MR2505848}.
For further details  on  the  Gauss quadrature, we refer
the readers to \cite{Freund1993,MR3688917,MR2456794,deckers2009orthogonal,MR2582949,MR4212916,MR4433120,MR4364913} and references therein.

\subsection{The MOR error  for the descriptor model: $E\neq I$}\label{section:descriptor_model}
This section presents a discussion of the  approximation of  $h(\mathfrak{s})=c^H(\mathfrak{s}E-A)^{-1}b$
by $\tilde{h}(\mathfrak{s})=c^H\mathcal{V}(\mathfrak{s}\mathcal W^HE\mathcal V-\mathcal W^HA\mathcal V)^{-1}\mathcal W^Hb$.
%
%
For the sake of simplicity, we assume  that $E$ is invertible.
Consequently, we obtain 
$h(\mathfrak{s})=c^H[\mathfrak{s}I-(E^{-1}A)]^{-1}(E^{-1}b).$
%
Substituting it into Theorem \ref{thm:main_result_1}, we  obtain the error formula  of  MOR for the descriptor model.
\begin{equation*}
\begin{aligned}
\tilde{h}(\mathfrak{s})&=c^H\mathcal V(\mathfrak{s}\widetilde{\mathcal W}^H\mathcal V-\widetilde{\mathcal W}^HE^{-1}A\mathcal V)^{-1}\widetilde{\mathcal W}^H(E^{-1}b)\\
&=c^H\mathcal V[\mathfrak{s}(E^{-H}\widetilde{\mathcal W})^HE\mathcal V-(E^{-H}\widetilde{\mathcal W})^HA\mathcal V]^{-1}(E^{-H}\widetilde{\mathcal W})^Hb\\
&=c^H\mathcal V(\mathfrak{s}\mathcal W^HE\mathcal V-\mathcal W^HA\mathcal V)^{-1}\mathcal W^Hb,\\
\end{aligned}
\end{equation*}
where $\mathcal{V}$ and $\widetilde{\mathcal W}$ are set up by $(E^{-1}A,E^{-1}b,c)$
in Theorem \ref{thm:main_result_1}. Later, we shall use $\mathcal W=E^{-H}\widetilde{\mathcal W}$ instead of $\widetilde{\mathcal W}.$

\begin{theorem}\label{th:main_result_include_E}
	Let 
$\span(\mathcal V)=\cks(E^{-1}A,E^{-1}b,k_b,m_b)$ and 
$\span(\mathcal W)=\cks((AE^{-1})^H,E^{-H}c,k_c,m_c)$ with
$k_b+m_b=k_c+m_c=:l.$
Suppose that the Lanczos biorthogonalization procedure of $A$, $[\prod_{j=2}^{k_b}(A-s_jE)^{-1}E](A-s_1E)^{-1}b$ and $\prod_{j=2}^{k_c}(A-t_jE)^{-H}E^H](A-t_1E)^{-H}c$ does not break down until the $(l+1)$th iteration. With notations \eqref{eqn:prod_all_shifts} and \eqref{eqn:g_b_g_c_expression}, it holds that 
\begin{equation*}
\begin{aligned}
e(\mathfrak{s})=h(\mathfrak{s})-\tilde{h}(\mathfrak{s})&=c^H(\mathfrak{s}E-A)^{-1}b-c^H\mathcal V(\mathfrak{s}\mathcal W^HE\mathcal V-\mathcal W^HA\mathcal V)^{-1}\mathcal W^Hb\\
&=\frac{1}{g_b(\mathfrak{s})g_c(\mathfrak{s})}c^Hg_c(E^{-1}A)(\mathfrak{s}I-E^{-1}A)^{-1}g_b(E^{-1}A)E^{-1}b\\
&=\frac{1}{g_b(\mathfrak{s})g_c(\mathfrak{s})}c^Hg_c(E^{-1}A)g_b(E^{-1}A)(\mathfrak{s}E-A)^{-1}b,\\
\end{aligned}
\end{equation*}
%
where  $\Lambda(\lambda)$
is the monic characteristic polynomial of $(\mathcal W^HE \mathcal V)^{-1}\mathcal W^HA\mathcal V$.

\end{theorem}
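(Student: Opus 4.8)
The plan is to reduce the descriptor case $E\neq I$ to the already-proven standard case $E=I$ by the substitution $\hat A := E^{-1}A$, $\hat b := E^{-1}b$, leaving $c$ unchanged. First I would observe that $h(z)=c^H(zE-A)^{-1}b = c^H(zI-\hat A)^{-1}\hat b$, so the transfer function of the descriptor system is literally the transfer function of the standard system $(\hat A,\hat b,c)$. The remaining work is to check that the reduced transfer function $\tilde h(z)=c^H\mathcal V(z\mathcal W^HE\mathcal V-\mathcal W^HA\mathcal V)^{-1}\mathcal W^Hb$ also agrees with the standard reduced transfer function built from $(\hat A,\hat b,c)$ using the \emph{same} right space $\span(\mathcal V)$ but with the modified left space. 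This is exactly the computation displayed just before the theorem statement: setting $\widetilde{\mathcal W}:=E^H\mathcal W$ one has
\begin{equation*}
\begin{aligned}
c^H\mathcal V(z\mathcal W^HE\mathcal V-\mathcal W^HA\mathcal V)^{-1}\mathcal W^Hb
=c^H\mathcal V(z\widetilde{\mathcal W}^H\mathcal V-\widetilde{\mathcal W}^H\hat A\mathcal V)^{-1}\widetilde{\mathcal W}^H\hat b,
\end{aligned}
\end{equation*}
so $\tilde h(z)$ is the Petrov--Galerkin reduced transfer function of $(\hat A,\hat b,c)$ with right basis $\mathcal V$ and left basis $\widetilde{\mathcal W}$.

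Next I would verify that the subspace hypotheses of Theorem \ref{thm:main_result_1} are met for the triple $(\hat A,\hat b,c)$ with these bases. By hypothesis $\span(\mathcal V)=\cks(E^{-1}A,E^{-1}b,k_b,m_b)=\cks(\hat A,\hat b,k_b,m_b)$, which is the right combined Krylov subspace required. For the left space I must check $\span(\widetilde{\mathcal W})=\span(E^H\mathcal W)=\cks(\hat A^H,c,k_c,m_c)$. Since $\span(\mathcal W)=\cks((AE^{-1})^H,E^{-H}c,k_c,m_c)$ and $\hat A^H=A^HE^{-H}=(E^{-H})(E^HA^HE^{-H})=\dots$, the cleanest route is to note $\hat A^H = (E^{-1}A)^H = A^HE^{-H}$ and that $E^H\cdot\RK((AE^{-1})^H,E^{-H}c,\overline{\mathbb T},k_c)$ telescopes into $\RK(A^HE^{-H},c,\overline{\mathbb T},k_c)=\RK(\hat A^H,c,\overline{\mathbb T},k_c)$, because $E^H(A^H-t_jE^H)^{-1}\cdot = (A^HE^{-H}-t_jI)^{-1}E^H\cdot$ and the trailing $E^H$ eventually hits $E^{-H}c$ to give $c$; the same telescoping handles the polynomial Krylov part $E^H\Krylov((AE^{-1})^H,E^{-H}c,m_c)=\Krylov(\hat A^H,c,m_c)$. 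This is precisely why the vectors generating the Lanczos procedure are written as $[\prod_{j=2}^{k_b}(A-s_jE)^{-1}E](A-s_1E)^{-1}b$ and the analogous expression for $c$: these equal $\varphi(\hat A)^{-1}\hat b$ and $\psi(\hat A)^{-H}c$ up to scaling, so the non-breakdown assumption on the Lanczos biorthogonalization of $(\hat A,\varphi(\hat A)^{-1}\hat b,\psi(\hat A)^{-H}c)$ is exactly the stated hypothesis.

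With the hypotheses transferred, Theorem \ref{thm:main_result_1} applied to $(\hat A,\hat b,c)$ gives immediately
\begin{equation*}
\begin{aligned}
e(z)=\frac{1}{g_b(z)g_c(z)}c^Hg_c(\hat A)(zI-\hat A)^{-1}g_b(\hat A)\hat b
=\frac{1}{g_b(z)g_c(z)}c^Hg_c(E^{-1}A)(zI-E^{-1}A)^{-1}g_b(E^{-1}A)E^{-1}b,
\end{aligned}
\end{equation*}
which is the first claimed formula; the second follows by the identity $(zI-E^{-1}A)^{-1}E^{-1}=(zE-A)^{-1}$ together with the fact that $g_b(E^{-1}A)$, $g_c(E^{-1}A)$ are polynomials in $E^{-1}A$ and hence commute with $(zI-E^{-1}A)^{-1}$, so one may move them to the left of the resolvent and then absorb $E^{-1}$. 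Finally, the Ritz values $\lambda_i$ defining $\Lambda$ must be identified: Theorem \ref{thm:main_result_1} uses the eigenvalues of $(\widetilde{\mathcal W}^H\mathcal V)^{-1}\widetilde{\mathcal W}^H\hat A\mathcal V$, and substituting $\widetilde{\mathcal W}=E^H\mathcal W$, $\hat A=E^{-1}A$ shows this matrix equals $(\mathcal W^HE\mathcal V)^{-1}\mathcal W^HA\mathcal V$, matching the statement. I expect the main obstacle to be the bookkeeping in the second paragraph --- verifying carefully that $E^H\mathcal W$ spans exactly the combined Krylov space $\cks(\hat A^H,c,k_c,m_c)$, i.e. that the $E/E^H$ factors telescope correctly through both the rational and the polynomial blocks --- since everything else is a direct reduction to the already-established Theorem \ref{thm:main_result_1}.
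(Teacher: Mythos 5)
Your reduction to the $E=I$ case via $\hat A=E^{-1}A$, $\hat b=E^{-1}b$ and the left-basis change $\widetilde{\mathcal W}=E^H\mathcal W$ is exactly the derivation the paper gives in Section \ref{section:descriptor_model} (which writes it as $\mathcal W=E^{-H}\widetilde{\mathcal W}$), so the proposal is correct and follows the same route. Your explicit telescoping check that $E^H\cdot\cks((AE^{-1})^H,E^{-H}c,k_c,m_c)=\cks((E^{-1}A)^H,c,k_c,m_c)$ is a detail the paper leaves implicit, and it is verified correctly.
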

\section{The error formula of  the one-sided  projection method}\label{sect:one_sided_error_formula_section}

%

It is evident that the use of known methods can be applied to approximate  $\mathcal{H}(A)b=(\mathfrak{s}I-A)^{-1}b$. 
It is the action of a matrix function on a vector,which is  discussed in \cite[Chapter 13]{MR2396439} and \cite{MR2528942,MR3095912,guttel2010rational}. 
Once the pre-multiplication by $c^H$ has been performed, an approximation of the transfer function will be obtained.
%
%
%
%
The basis is derived from the following:
\begin{equation}\label{eqn:V_2l_expression}
\begin{aligned}
	\cks(E^{-1}A,E^{-1}b,k,m):&=\RK(E^{-1}A,E^{-1}b,\mathbb{S},k)\cup \Krylov(E^{-1}A,E^{-1}b,m),\\
\span(V_{2l})=\cks(E^{-1}A,E^{-1}b,k,m)&=
\Krylov(E^{-1}A,\varphi_{k}(E^{-1}A)^{-1}E^{-1}b,2l),\\
%
V^HV=I, \qquad \varphi_{k}(\lambda)&=\prod\limits_{j=1}^{k}(\lambda-s_j),
\qquad k+m=2l.\\
\end{aligned}
\end{equation}
By summarizing \cite[(2.1)]{MR2854603} and \cite[(2.2)]{MR2858340},
we obtain the error of the one-sided projection method for MOR.
%
%
\begin{theorem}\label{thm:one_sided_error}
Suppose the dimension of $\cks(E^{-1}A,E^{-1}b,k,m)$ is $2l$.
With notations (\ref{eqn:V_2l_expression}),  it holds that
\begin{equation*}
\begin{aligned}
h(\mathfrak{s})-\breve{h}(\mathfrak{s})&=c^H(\mathfrak{s}E-A)^{-1}b-c^HV(\mathfrak{s}V^HEV-V^HAV)^{-1}V^Hb\\
&=\frac{1}{G_{\rm one}(\mathfrak{s})}c^HG_{\rm one}(E^{-1}A)(\mathfrak{s}E-A)^{-1}b,
\end{aligned}
\end{equation*}
where
\begin{equation}\label{eqn:G_one_Galerkin_form}
\begin{aligned}
G_{\rm one}(\lambda):=\frac{\bigwedge_{\rm one}(\lambda)}{\varphi(\lambda)},
\end{aligned}
\end{equation}
with $\bigwedge_{\rm one}(\lambda)=\prod_{i=1}^{2l}(\lambda-\lambda_i)$ be 
the monic characteristic polynomial of $(V^HEV)^{-1}V^HAV$.

\end{theorem}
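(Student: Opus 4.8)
The plan is to imitate the argument behind Theorem~\ref{thm:main_result_1}, with the two-sided (Petrov--Galerkin) projection replaced by the one-sided (Galerkin, $W=V$) one, and to pass from the descriptor pencil to a standard matrix exactly as in Section~\ref{section:descriptor_model}. Write $\mathcal A:=E^{-1}A$ and $\mathfrak b:=E^{-1}b$, and introduce the Galerkin residual $\breve r_b:=b-(zE-A)x_b$ with $x_b:=V(zV^HEV-V^HAV)^{-1}V^Hb$. From $V^H(zE-A)x_b=V^Hb$ one gets $V^H\breve r_b=0$, and from $(zE-A)^{-1}b-x_b=(zE-A)^{-1}\breve r_b$ one obtains the working identity
\begin{equation*}
e(z)=h(z)-\breve{h}(z)=c^H(zE-A)^{-1}b-c^Hx_b=c^H(zE-A)^{-1}\breve r_b ,
\end{equation*}
so the whole problem reduces to computing $\breve r_b$ explicitly.

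Next I would exploit the structure of $\span(V)$ recorded in \eqref{eqn:V_2l_expression}, namely $\span(V)=\Krylov(\mathcal A,\varphi(\mathcal A)^{-1}\mathfrak b,2l)$. Hence every element of $\span(V)$ has the form $q(\mathcal A)\varphi(\mathcal A)^{-1}\mathfrak b$ with $\deg q\le 2l-1$; writing $x_b=\chi_b(\mathcal A,z)\mathfrak b$ with $\chi_b(\lambda,z)\in\mathbb{P}_{2l-1}(\lambda)/\varphi(\lambda)$ and using $zE-A=E(zI-\mathcal A)$ gives
\begin{equation*}
\breve r_b=E\bigl[\mathfrak b-(zI-\mathcal A)x_b\bigr]=E\,\gamma_b(\mathcal A,z)\mathfrak b,\qquad
\gamma_b(\lambda,z)=1-(z-\lambda)\chi_b(\lambda,z)\in\mathbb{P}_{2l}(\lambda)/\varphi(\lambda),
\end{equation*}
where $\gamma_b$ obeys the normalisation $\gamma_b(z,z)=1$ and $\gamma_b(\mathcal A,z)\mathfrak b\in\Krylov(\mathcal A,\varphi(\mathcal A)^{-1}\mathfrak b,2l+1)$. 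The Galerkin condition $V^H\breve r_b=0$ then reads $\gamma_b(\mathcal A,z)\mathfrak b\perp E^H\span(V)$.

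The key step is a one-sided Galerkin analogue of Lemma~\ref{lm:Krylov_Cayley}/Lemma~\ref{lm:rational_expression_of_residuals}: any nonzero $w\in\Krylov(\mathcal A,\varphi(\mathcal A)^{-1}\mathfrak b,2l+1)$ with $w\perp E^H\span(V)$ is a scalar multiple of $\bigwedge_{\rm one}(\mathcal A)\varphi(\mathcal A)^{-1}\mathfrak b=G_{\rm one}(\mathcal A)\mathfrak b$, where $\mathcal T:=(V^HEV)^{-1}V^HAV$, $\bigwedge_{\rm one}$ is its monic characteristic polynomial, and $G_{\rm one}=\bigwedge_{\rm one}/\varphi$ as in \eqref{eqn:G_one_Galerkin_form}. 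I would prove this exactly as Lemma~\ref{lm:Krylov_Cayley}, but with a twist forced by $E\ne I$: take a basis of $\span(V)$ respecting the Krylov grading and extend it by the (generically unique) next vector $v_{2l+1}\in\Krylov(\mathcal A,\varphi(\mathcal A)^{-1}\mathfrak b,2l+1)$ that is $E^H$-orthogonal to $\span(V)$; projecting $\mathcal A V$ onto this extended, graded basis by $V^HE$ makes the projected (unreduced Hessenberg) matrix come out to be exactly $\mathcal T$, so the orthogonality $V^HEw=0$ collapses to $p(\mathcal T)e_1=0$ for the monic factor $p$ of $\gamma_b$, and unreducedness (the non-breakdown hypothesis, i.e.\ $\dim\cks=2l$ together with $V^HEV$ nonsingular) forces $p=\bigwedge_{\rm one}$. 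This is precisely the viewpoint encoded in \cite[(2.1)]{MR2854603} and \cite[(2.2)]{MR2858340}. Granting it, $\gamma_b(\lambda,z)=\rho(z)G_{\rm one}(\lambda)$, and $\gamma_b(z,z)=1$ pins down $\rho(z)=1/G_{\rm one}(z)$, whence $\breve r_b=\dfrac{1}{G_{\rm one}(z)}\,E\,G_{\rm one}(E^{-1}A)E^{-1}b$.

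Finally, substituting this into the working identity and using $(zE-A)^{-1}E=(zI-E^{-1}A)^{-1}$ together with the fact that $G_{\rm one}(E^{-1}A)$ commutes with $(zI-E^{-1}A)^{-1}$,
\begin{align*}
e(z)
&=\frac{1}{G_{\rm one}(z)}\,c^H(zE-A)^{-1}E\,G_{\rm one}(E^{-1}A)E^{-1}b\\
&=\frac{1}{G_{\rm one}(z)}\,c^H(zI-E^{-1}A)^{-1}G_{\rm one}(E^{-1}A)E^{-1}b\\
&=\frac{1}{G_{\rm one}(z)}\,c^H G_{\rm one}(E^{-1}A)(zE-A)^{-1}b ,
\end{align*}
which is the asserted formula. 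I expect the key step to be the main obstacle: in the two-sided descriptor result (Theorem~\ref{th:main_result_include_E}) one absorbs $E$ by taking the two projection spaces to be $\mathcal V$ and $E^{-H}\widetilde{\mathcal W}$, whereas here the two spaces must coincide, so $E$ genuinely sits between test and trial spaces and the plain Arnoldi/Lanczos residual identities do not apply verbatim; one must run the argument through the $E^H$-orthogonal extension (equivalently, invoke the rational Arnoldi decomposition of \cite{MR2854603,MR2858340}) so that $\mathcal T=(V^HEV)^{-1}V^HAV$ — rather than the $E$-unweighted projected matrix — is what appears, while also verifying the non-breakdown hypothesis so that $\bigwedge_{\rm one}$, and not a proper factor of it, is the correct annihilating polynomial.
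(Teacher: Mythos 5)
Your argument is correct and follows essentially the same route as the paper: the paper obtains this theorem by combining the identity $e(z)=c^H(zE-A)^{-1}\breve r_b$ with the known one-sided residual expression cited from \cite{MR2854603,MR2858340}, which is exactly the residual formula $\breve r_b=E\,G_{\rm one}(E^{-1}A)E^{-1}b/G_{\rm one}(z)$ that you rederive via the Cayley--Hamilton/normalization argument of Lemma \ref{lm:Krylov_Cayley} and Lemma \ref{lm:rational_expression_of_residuals}. Your identification of the correct projected matrix $(V^HEV)^{-1}V^HAV$ (via $E^H$-weighted orthogonality of the test space) is the right resolution of the $E\neq I$ subtlety, so the proposal fills in precisely the details the paper delegates to those references.
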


We observe $G_{\rm one}(\lambda)$ has a similar  form to $G_{\rm two}(\lambda)$ in  (\ref{eqn:G_two_form}), which is formed  in the two-sided projection method.
The primary difference is that the order of the reduced system  obtained by the two-sided  projection is  $l$, while
the one-sided projection yields $2l$.
In comparison  to  the numerical quadrature, it is readily  comprehensible.
To attain the  degree of precision $2l-1$, it is possible to employ Gauss quadrature (two-sided  projection method, cf. Section \ref{subsect:Hermitian_remainder_lambda}), or alternatively, to utilize an interpolating polynomial of degree  $2l-1$ to perform the integral (one-sided  projection method, cf. Section~\ref{sect:one_sided_error_formula_section}).
This phenomenon has also been observed in numerical experiments (cf. Section \ref{sect:numerical_experiments}).
\section{Interpolatory $H_{\infty}$ norm MOR}
\label{sect:H_infty_norm_section}
In this section, we assume $A$ is $c$-stable, which implies that the eigenvalues of A are located  on left-hand  half plane.
The norm $\|\cdot\|_{H_\infty}$ is defined in a $\mathcal {H}_{\infty}$-space. The space requires the function matrix to be analytic and bounded in the open right-hand half plane. 
For  $h(\mathfrak{s})\in \mathcal {H}_\infty^n$, we have $\|h(\mathfrak{s})\|_{H_\infty}=\sup_{\mathfrak{s}\in i\mathbb{R}\cup\{\infty\}} |c^H(\mathfrak{s}I-A)^{-1}b|.$
The MOR in the $H_{\infty}$ norm sense is to solve the optimization problem:
\begin{eqnarray}
\|h(\mathfrak{s})-\hat{h}_*(\mathfrak{s})\|_{H_{\infty}}&=&\min\limits_{\hat{h}(\mathfrak{s})\in \mathcal {H}_\infty^l}
\|h(\mathfrak{s})-\hat{h}(\mathfrak{s})\|_{H_{\infty}},\label{eq:H_infty_problems}\\
	\mbox{where} \quad  
		h(\mathfrak{s})&=&c^H(\mathfrak{s}E-A)^{-1}b,\nonumber\\
	\hat{h}(\mathfrak{s})&=&c_l^H(\mathfrak{s}E_l-A_l)^{-1}b_l+d_l.\label{eqn:reduced_h_hat}
	\end{eqnarray}
The motivation of  the $H_{\infty}$ norm MOR is clearly described in
\cite[Section 1]{MR3068159}.
More theories and algorithms can be found in 
\cite{MR1205327,MR1363936}
and references therein.
It is worth noting that balanced truncation methods \cite{MR609248,MR465491}
 provide an  $H_{\infty}$ norm  estimation  of
the model error by using the Hankel singular values \cite{MR2503780,MR2155615}.



%

We appoximately solve \eqref{eq:H_infty_problems} by solving the interpolatory $H_{\infty}$ norm MOR.
Concretely speaking, the function  $\hat h(\mathfrak{s})$ in \eqref{eq:H_infty_problems} is constrained to be equal to $\tilde h(\mathfrak{s})$, which is an  interpolatory transfer function.
Therefore, the reduced  transfer function $\tilde h(\mathfrak{s})$ is a rational interpolating function of $h(\mathfrak{s})$. This implies that $\tilde h(\mathfrak{s})$ can be formed by a rational Krylov subspaces projection method.
For simplicity, we shall set $d_l=0$.  As the order $l$ increases, the error $\|h(\mathfrak{s})-\tilde{h}(\mathfrak{s})\|_{H_{\infty}}$ decreases.


%


%
%
%
%
%

%
%


In seeking the optimal shifts for $\tilde h(\mathfrak{s})$, it is evident that the shifts must   fully influence the error $e(\mathfrak{s})$.
Hence, we set  $k_b=k_c=l.$
%
%
In the words of finding the optimal shifts, the interpolatory $H_{\infty}$ norm MOR can be expressed as the following optimization problem.
\begin{equation}\label{eqn:min_sup_problem_H_infty}
\begin{aligned}
\min_{s_1,s_2,\ldots,s_{l},t_1,t_2,\ldots,t_{l}}\sup \limits_{\mathfrak{s}\in i\mathbb{R}\cup\{\infty\}}|e(\mathfrak{s})|
&=
\min_{s_1,s_2,\ldots,s_{l},t_1,t_2,\ldots,t_{l}}\sup \limits_{\mathfrak{s}\in i\mathbb{R}\cup\{\infty\}}|h(\mathfrak{s})-\td h(\mathfrak{s})|\\
&=\min_{s_1,s_2,\ldots,s_{l},t_1,t_2,\ldots,t_{l}}\sup \limits_{\mathfrak{s}\in i\mathbb{R}\cup\{\infty\}}
\left|\frac{1}{G(\mathfrak{s})}\right|\left|c^HG(E^{-1}A)(\mathfrak{s}E-A)^{-1}b \right|,\\
\end{aligned}
\end{equation}
where $G(\mathfrak{s})$ is either $G_{\rm two}(\mathfrak{s})$ in \eqref{eqn:G_two_form} or  $G_{\rm one}(\mathfrak{s})$ in \eqref{eqn:G_one_Galerkin_form}.
Given that $(\mathfrak{s}E-A)$ is an ${n \times n}$ matrix, the complete computation of $e(\mathfrak{s})$ is time-consuming.
Consequently, the development of effective algorithms demands the utilization of certain approximations of $e(\mathfrak{s})$.
\begin{remark}\label{rmk:ez_estimations}
We establish the following  approximations for $|e(\mathfrak{s})|$.
\begin{equation*}
	\begin{aligned}
		e(\mathfrak{s})=h(\mathfrak{s})-\tilde{h}(\mathfrak{s})
		&=\frac{1}{G(\mathfrak{s})}c^HG(E^{-1}A)(\mathfrak{s}E-A)^{-1}b,\\
		{\rm Approximation \quad 1}: \quad|e(\mathfrak{s})|&\approx \mathcal {C}_1\frac{1}{|G(\mathfrak{s})|},\\
		{\rm Approximation \quad 2}: \quad|e(\mathfrak{s})|&\leq \frac{1}{|G(\mathfrak{s})|}\|c^HG(E^{-1}A)\|_2 \|(\mathfrak{s}E-A)^{-1}b\|_2\\
		&\approx \frac{1}{|G(\mathfrak{s})|}\mathcal {C}_2\|V(\mathfrak{s}W^HEV-W^HAV)^{-1}W^Hb\|_2\\
		&= \mathcal {C}_2\frac{1}{|G(\mathfrak{s})|}\|(\mathfrak{s}W^HEV-W^HAV)^{-1}W^Hb\|_2.\\
	\end{aligned}
\end{equation*}
\end{remark}

The term $G(\mathfrak{s})$ is a quantity formula, which implies that  it can be computed easily.
The one-sided projection algorithm  in \cite{MR2858340}  is designed by  using Approximation 1
(cf. Algorithm \ref{alg:scl}).
%
Except $1/G(\mathfrak{s})$, the left of $e(\mathfrak{s})$ is still dependent on $\mathfrak{s}$. This implies that Approximation 1 can be improved.
In Approximation 2, we  employ  $\|V(\mathfrak{s}W^HEV-W^HAV)^{-1}W^Hb\|_2$ to approximate $\|(\mathfrak{s}E-A)^{-1}b\|_2$.
%
%
%
In our experimental examples (cf. Table~\ref{tbl:small_problems_CPU}),
it can be observed that   algorithms based on Approximation 2  exhibit better behaviour than those based on Approximation 1.

\section{Greedy algorithms for MOR}\label{section:greedy_algorithms_two_sided}
As outlined in the books\cite{MR2155615,MR2516498}, a range of methodologies exists for MOR.
In the field of rational Krylov subspace projection methods,
there exist numerous adaptive algorithms based on various error estimations \cite{frangos2007adaptive,MR2501562,MR3473700,MR3592472,panzer2014model}.
By using Remarking~\ref{rmk:ez_estimations}, we propose a greedy two-sided projection algorithm,
which  is  a modification of  a one-sided projection algorithm in \cite{MR2858340}.

\subsection{Algorithm \ARKSM}

\begin{algorithm}[t]
\caption{ Adaptive Rational Krylov Subspace Method (\ARKSM) for MOR\label{alg:scl}\cite{MR2858340}}
{\bf Input}: $A,E \in \mathbb{C}^{n \times n}, b,c \in \mathbb{C}^{n \times 1}.$

{\bf Input}: $s_{\min}, s_{\max}, l_{\max}$.

\begin{algorithmic}[1]
\STATE $s_1=s_{\min},v_1=(A-s_1E)^{-1} b;$
\STATE $s_2=s_{\max},v_2=(A-s_2E)^{-1} b;$
\STATE  $V=[v_1,v_2];$
 $\quad V=\orth(V);$
\FOR{$l=2, \ldots ,l_{\max}$}
    \STATE  Update $V^HAV;V^HEV;V^Hb;$
    \STATE Get Ritz values $\lambda_i \in \eig((V^HEV)^{-1}V^HAV)$;

    \STATE  Determine $\partial \Xi=$ convex hull of $\{-\lambda_1,\ldots,-\lambda_l,s_{\min},s_{\max}\}$;

     \STATE Select the next shift $s_{l+1}$:
\begin{equation}\label{eqn:ARKSM_next_shift}
\begin{aligned}
s_{l+1}
&=\arg\max\limits_{\mathfrak{s}\in\partial\Xi}\left|\frac{\prod\limits_{j=1}^{l}(\mathfrak{s}-s_j)}{\prod\limits_{j=1}^{l}(\mathfrak{s}-\lambda_j)}\right|.\\
\end{aligned}
\end{equation}

    \STATE $v_{l+1}=(A-s_{l+1}E)^{-1} b;$
   \STATE  $V=\orth([V,v_{l+1}]);$
\ENDFOR

\end{algorithmic}
{\bf Output}: $V$; Shifts $s_l (l=1,2,\ldots,l_{\max})$; $\quad \breve{h}(\mathfrak{s})=c^HV(\mathfrak{s}V^HEV-V^HAV)^{-1}V^Hb \approx h(\mathfrak{s}).$
\end{algorithm}

In paper \cite{MR2858340}, it is required that the shifts should be   either real numbers or conjugate pairs. 
For simplicity, we do not impose this constraint. 
The one-sided projection algorithm is  outlined in Algorithm~\ref{alg:scl}.
The concept of the greedy algorithm is employed in \eqref{eqn:ARKSM_next_shift} for computing the optimal shifts.
As stated in Remark~\ref{rmk:ez_estimations} Approximation 1, the error $|e(\mathfrak{s})|$ is actually approximated by $\mathcal {C}_1/|G(\mathfrak{s})|$.
The initial  two shifts are acquired by
\begin{equation}\label{eqn:definition_of_s_min_s_max}
\begin{aligned}
s_{\min}=\mbox{\texttt{eigs(-A,E,1,`sm')}},
\quad 
s_{\max}=\mbox{\texttt{eigs(-A,E,1,`lm')}}.
\end{aligned}
\end{equation}
%
%
%
%
The other shifts are selected on the boundary of $\Xi$, which is a mirror region of  the Ritz values.
The designation of $\Xi$ is based on the observation that the optimal $H_2$ norm MOR condition is 
$s_i=-\overline{\lambda_i}$ \cite{MR2421462}.
By applying the maximal value theorem to $\Xi$, one can ascertain that the shifts are located on the boundary $\partial\Xi$.
Another explanation can be found in \cite[Section 4.1]{MR3095912}.

\subsection{\textsf{Two-sided} algorithms}

\begin{algorithm}[t]
	\caption{\textsf{Two-sided} greedy	rational Krylov subspace method for MOR \label{alg:two_side_adaptive}}
	{\bf Input}: $A,E \in \mathbb{C}^{n \times n}, b,c \in \mathbb{C}^{n \times 1}.$
	
    {\bf Input}: $\alpha,\beta,k_{\rm two},s_{\max}, l_{\max}$, Option number in  \eqref{eqn:two_sides_s_select}.
	
	\begin{algorithmic}[1]
		\STATE $s_1=|s_{\max}|\iota /10,v_1=(A-s_1E)^{-1} b;$
		\STATE $t_1=-|s_{\max}|\iota /10,w_2=(A-s_2E)^{-H} c;$
		\STATE 
		$V=\orth(v_1); W=\orth(w_1);$
		
		\STATE  Determine the set for choosing shifts: 
		$\mathbb{Z}_2=\mathbb{Z}(\alpha,\beta,k_{\rm two})$ from (\ref{eqn:image_samples});
		\FOR{$l=1, \ldots ,l_{\max}$}
		\STATE  Update $W^HAV;W^HEV;W^Hb;c^HV;$
		\STATE Get Ritz values $\lambda_i \in \eig((W^HEV)^{-1}W^HAV);$
		$\quad \Lambda(\mathfrak{s}):=\prod_{i=1}^{l}(\mathfrak{s}-\lambda_i);$
		\STATE Set new symbols:
		$\varphi_l(\mathfrak{s}):=\prod_{i=1}^{l}(\mathfrak{s}-s_i);\quad \psi_l(\mathfrak{s}):=\prod_{i=1}^{l}(\mathfrak{s}-t_i);$
		
		\STATE Select next shift for $s_{l+1}$:
		{\small
			\begin{equation}
				\label{eqn:two_sides_s_select}
				\begin{aligned}
					&{\rm Option \quad 1:}& s_{l+1}&=\arg\max\limits_{\mathfrak{s}\in\mathbb{Z}_2}\left| \frac{\varphi_l(\mathfrak{s})\psi_l(\mathfrak{s})}{[\Lambda(\mathfrak{s})]^2}\right|,\\
					&{\rm Option \quad 2:}& s_{l+1}&=\arg\max\limits_{\mathfrak{s}\in\mathbb{Z}_2}\left| \frac{\varphi_l(\mathfrak{s})\psi_l(\mathfrak{s})}{[\Lambda(\mathfrak{s})]^2}\right|\|(\mathfrak{s}W^HEV-W^HAV)^{-1}W^Hb\|_2,\\
				\end{aligned}
			\end{equation}
		}
		
		
		
		\STATE Select the next shift for $t_{l+1}$:
		\begin{equation*}
		\begin{aligned}
		t_{l+1}&=\texttt{conj}(s_{l+1});\\
	\end{aligned}
	\end{equation*}
		
%
		\STATE $v_{l+1}=(A-s_{l+1}E)^{-1} b;w_{l+1}=(A-t_{l+1}E)^{-H}c;$
		\STATE  $V=\orth([V,v_{l+1}]);W=\orth([W,w_{l+1}]);$
		\ENDFOR
		
	\end{algorithmic}
	{\bf Output}: $V,W,$
	$\quad \tilde{h}(\mathfrak{s})=c^HV(\mathfrak{s}W^HEV-W^HAV)^{-1}W^Hb \approx h(\mathfrak{s}).$
\end{algorithm}
We propose the greedy \twosided~algorithm in  Algorithm~\ref{alg:two_side_adaptive}.
Since $E^{-1}A$ is c-stable, it is preferable to utilize shifts on the right half-plane in order to guarantee that $(A-s_iE)$ remains nonsingular.
%
The boundary of the right  half plane is the imaginary axis.
Moreover, we are currently doing research on $H_{\infty}$ norm  MOR, which is defined on the  imaginary  axis.
Thus, we shall select the shifts on the imaginary axis. Selecting shifts along the imaginary axis is a non-novel operation
 (e.g.\cite{frangos2007adaptive,MR2501562}).
In practice, our shifts are selected on $\mathbb{Z}(\alpha,\beta,500)$, where
\begin{equation}\label{eqn:image_samples}
	\begin{aligned}
		\mathbb{Z}(\alpha,\beta,k)=[-\iota\times\texttt{logspace}(\alpha,\beta,k),0,\iota\times\texttt{logspace}(\alpha,\beta,k)].
	\end{aligned}
\end{equation}

In the \twosided~algorithm, 
the function $G(\mathfrak{s})$ is $G_{\rm two}(\mathfrak{s})$ in
\eqref{eqn:G_two_form}.
Since the variable  $\mathfrak{s}$ in $e(\mathfrak{s})$ is actually independent of the obtained shifts and Ritz values,
the  greedy algorithm is an appropriate method for solving \eqref{eqn:min_sup_problem_H_infty}.
 The next  shifts are obtained by identifying the maximal point of the current error.
On Line 9, we provide two potential options for the next shift, which
correspond to the approximations of $e(\mathfrak{s})$ in Remark \ref{rmk:ez_estimations}.

Moreover, we claim that the obtained shifts are distinct,  given that any obtained shifts satisfy $e(s_j)=0,$ which implies that they cannot be the maximal points of the error. In other words, the shifts obtained will not be selected in the subsequent iterations.
This fact  implies that  we can
orthogonalize $[V,(A-s_iE)^{-1}b]$ instead of  $[V,(A-s_iE)^{-1}v],$ where $v$ is the last vector of $V$.
This is the process that is undertaken on Line 12.

The computation of \eqref{eqn:two_sides_s_select} is 
inexpensive. The computation of 
$\varphi_l(\mathfrak{s}),\psi_l(\mathfrak{s})$ and $\Lambda(\mathfrak{s})$ merely involves quantity calculations.
In order to implement Option 2, it is necessary to solve the following
small order  linear equations: $(\mathfrak{s}W^HEV-W^HAV)^{-1}V^Hb$.
All of the coefficient matrices are  identical, while only $\mathfrak{s}$ varies in $\mathbb{Z}_2$. Consequently,  we are able to 
  utilize \texttt{hess}$(W^HEV,W^HAV)$ and  \texttt{linsolve(..., opts.UHESS=1)}
  to reduce the CPU time.

A challenge is presented in determining the value of $t_{l+1}$ for the left subspace.
A simple decision is made by setting $t_{l+1}=\overline{s_{l+1}}$. 
A number of alternative strategies regarding $t_{l+1}$ have been investigated, which are analogous to the previously discussed strategies concerning $s_{l+1}$.
The results of our numerical testing do not indicate that those options exhibit better behaviors than the proposed strategy of   $t_{l+1}=\overline{s_{l+1}}.$

The $H_{\infty}$ norm error is quantified by the max error:
\begin{equation}\label{eqn:compute_error_ez_norm}
	\begin{aligned}
		\|e(\mathfrak{s})\|_{\infty}=\max\limits_{\mathfrak{s}\in \mathbb{Z}_e}|h(\mathfrak{s})-\tilde{h}(\mathfrak{s})|,
	\end{aligned}
\end{equation}
where $\mathbb{Z}_e$  also has form \eqref{eqn:image_samples}.
Our greedy algorithms actually are not required to compute it. 
The max error is  employed  only for the purpose of evaluation in comparison with other algorithms.
The computation of the max  error requires a significant amount of calculation.
This is not analogous to the case of matrix equations, where the residual norm can be computed in the reduced problems \cite{MR3102415,MR3299557}.

\section{Numerical experiments}\label{sect:numerical_experiments}
All  experiments are carried out in Matlab2016a  on a notebook computer (64 bits) with an Intel CPU i7-5500U and  8GB memory.
Any data involving random numbers are fixed by setting \texttt{rand(`state',0)} or \texttt{randn(`state',0)}.
It is worthy of mention that  \texttt{eigs} employs a random vector as the initial vector.
Prior to utilizing \texttt{eigs} for computing  \eqref{eqn:definition_of_s_min_s_max}, we also set \texttt{rand(`state',0)}.
We  shall compare our greedy algorithms with \ARKSM~(Algorithm \ref{alg:scl}), 
the adaptive Antoulas-Anderson (\AAA) algorithm and
the iterative rational Krylov algoriXthm  (\IRKA).

The \AAA~algorithm can be applied for MOR\cite{10.1093/imamci/3.2-3.61,MR3840899,MR3805855}. 
 As it is designed to solve a minimum-maximum problem, it can therefore be considered to approximately solve the $H_{\infty}$ norm MOR problem \eqref{eq:H_infty_problems}.
The algorithm acquires samples along the imaginary axis \cite[Section 6.9]{MR3805855}. The greedy idea is also employed, resulting in the selected samples being nested.
%
In the testing procedure, the samples set $\mathbb{Z}_A$ are constructed to have the same form of  $\mathbb{Z}(\alpha,\beta,k).$
%
When   $\mathbb{Z}_A=\mathbb{Z}_e$ is set,  the max error is directly outputted  by \AAA. 
This is an attractive advantage of  the \AAA~algorithm.

%


  The $H_2$ norm optimal MOR  is accomplished by \IRKA\cite{MR2421462,MR4180031}.
  When  \IRKA~converges, the optimal shifts are obtained, which satisfy the condition  that $t_i=s_i=-\overline{\lambda_i}.$
  The same shifts are used for spanning the left and right rational Krylov subspaces.
  The projection process reveals that $H_2$  norm optimal MOR is also a two-sided projection method.
  Therefore, its error $e(\mathfrak{s})$ can be expressed by Theorem~\ref{thm:main_result_1}.
  The two main disadvantages of \IRKA~are as follows: The convergence rate is relatively slow, and the optimal shifts are not nested
  with respect to order $l$. The advantage is that it has an optimization property in the $H_2$ norm sense. It is not necessary for \IRKA~to compute the model error in either the $H_2$ or the $H_\infty$ norm, as this is automatically optimized in the $H_2$ norm sense. 
  Nonetheless, we continue to employ its max error \eqref{eqn:compute_error_ez_norm}  as an  applicable standard for comparison with other algorithms.
  The initial shifts of \IRKA~are derived from the output shifts  of  \ARKSM.
  The \IRKA~iteration is terminated if the iteration number exceeds 100 or if the condition $\|S_j-S_{j-1}\|_2<10^{-6}\|S_{j}\|_2$ is satisfied, where $S_{j}$ represents the sorted shifts vector at the $j$th iteration.


For large-scale problems, the computation of all algorithms is primarily focused on the process of solving linear equations.
In all tests, the Matlab $\mathtt{backslash}$ function is directly employed to complete the calculations of
both $(A-s_iE)^{-1} b$ and $(A-s_iE)^{-H} c$.
In the case of an $l$-order MOR, the number of linear solvers required for each algorithm is taken into account. 
The AAA algorithm  requires the sampling of $h(\mathfrak{s})$ on the imaginary axis. 
The function $h(\mathfrak{s})$ is evaluated for $\mathfrak{s}\in \mathbb{Z}_A=\mathbb{Z}(\alpha_A,\beta_A,k_A).$
 Therefore, it requires a total of  $(2k_A+1)$ linear solvers,  which is independent of the order $l$.
 The \IRKA~algorithm requires $2j_{\max} l$ linear solvers, where $j_{\max}$ represents the final iteration number at which the algorithm terminates.
 The \ARKSM~algorithm requires $l$ linear solvers, whereas our  \twosided~algorithm requires $2l$ linear solvers.
 Note that \ARKSM~is a one-sided type of  projection algorithm.
 By contrast, both $H_2$(\IRKA) and our greedy \twosided~algorithm are two-sided types.



\textbf{Example 1 (small problems):}
The testing examples are taken from the SLICOT benchmark problems~\cite{MR1703544,MR2482336}. 
In instances wherein the model problems possess both multiple inputs and outputs, we set $\texttt{b=B(:,1)}$ and $\texttt{c=C(1,:)'}$. 
We set $\mathbb{Z}_e=\mathbb{Z}_A=\mathbb{Z}(-3,5,700)$  for computing the max error and undertaking  sampling in  \textsf{AAA}.
In \twosided~algorithms., the shifts are selected in $\mathbb{Z}_2=\mathbb{Z}(-3,5,500)$.

\begin{table}[htb]
	\caption{{\rm 40-order MOR of small problems} \label{tbl:small_problems_CPU}}
	\centering
	\begin{tabular}{|l|ccccccc|}
		\hline
		&	Matrix	&	\textsc{beam}	&	\textsc{CDplayer}	&	\textsc{eady}		&\textsc{fom}	&	\textsc{iss}	&	\textsc{random}	\\
		\hline
		&	Size	&${\rm	348 	}$&${\rm	120 	}$&${\rm	598 	}$&${\rm	1006 	}$&${\rm	270 	}$&${\rm	200 	}$\\
		
		\hline
		\multirow{6}{*}{CPU(s)}
		&	 Sampling	&${\rm	7.08 	}$&${\rm	0.20 	}$&${\rm	40.13 	}$&${\rm	0.51 	}$&${\rm	0.40 	}$&${\rm	3.76 	}$\\
		&	\AAA	&${\rm	7.67 	}$&${\rm	0.72 	}$&${\rm	40.66 	}$&${\rm	0.80 	}$&${\rm	1.01 	}$&${\rm	4.11 	}$\\
		
		&	\ARKSM	&${\rm	2.96 	}$&${\rm	2.62 	}$&${\rm	3.87 	}$&${\rm	2.67 	}$&${\rm	2.64 	}$&${\rm	2.91 	}$\\
		&	\twosided(O1)	&${\rm	3.21 	}$&${\rm	2.72 	}$&${\rm	5.61 	}$&${\rm	2.92 	}$&${\rm	2.84 	}$&${\rm	3.04 	}$\\
		&	\twosided(O2)	&${\rm	5.02 	}$&${\rm	4.72 	}$&${\rm	7.03 	}$&${\rm	4.61 	}$&${\rm	4.60 	}$&${\rm	4.79 	}$\\
		&	$H_2$(\IRKA)	&${\rm	44.85 	}$&${\rm	0.78 	}$&${\rm	235.87 	}$&${\rm	4.18 	}$&${\rm	2.02 	}$&${\rm	23.60 	}$\\
		
		\hline
		
		\multirow{5}{*}{$\|e(\mathfrak{s})\|_{\infty}$ }
		&	\AAA	&${\rm	5.83E-02	}$&${\rm	2.29E-03	}$&${\rm	2.28E-06	}$&${\rm	5.96E-12	}$&${\rm	4.54E-06	}$&${\rm	1.21E-09	}$\\
		&	\ARKSM	&${\rm	1.29E+01	}$&${\rm	3.87E+01	}$&${\rm	9.88E-04	}$&${\rm	5.01E-12	}$&${\rm	4.05E-03	}$&${\rm	1.61E-06	}$\\
		&	\twosided(O1)	&${\rm	1.12E+00	}$&${\rm	1.02E-01	}$&${\rm	1.58E-05	}$&${\rm	1.02E-11	}$&${\rm	4.26E-05	}$&${\rm	6.67E-09	}$\\
		&	\twosided(O2)	&${\rm	6.76E-01	}$&${\rm	9.25E-02	}$&${\rm	7.51E-06	}$&${\rm	3.72E-12	}$&${\rm	2.68E-05	}$&${\rm	5.25E-09	}$\\
		&	$H_2$(\IRKA)	&${\rm	4.24E-02	}$&${\rm	2.49E-02	}$&${\rm	9.77E-07	}$&${\rm	4.32E-12	}$&${\rm	1.57E-05	}$&${\rm	3.18E-09	}$\\

		\hline
		&	\IRKA\#iter	&${\rm	100 	}$&${\rm	39 	}$&${\rm	100 	}$&${\rm	100 	}$&${\rm	62 	}$&${\rm	100 	}$\\
		\hline
		
	\end{tabular}
	
	\begin{flushleft}
		\small{ 
			The ``sampling'' stage concerns the CPU times associated with the evaluation of $h(\mathfrak{s})$ for 
			$\mathfrak{s}\in \mathbb{Z}_A=\mathbb{Z}_e=\mathbb{Z}(-3,5,700).$
			As illustrated in Fig. \ref{fig:MORs_of_small_problems}, the data of \AAA~is only concerned with
			$l=29$ for \textsc{fom} and $l=31$ for \textsc{random}.
			In this table, the $H_2$ norm MOR(\IRKA) is not computed for $l < 40$.
			The initial shifts of \IRKA~are obtained from the output shifts of the \ARKSM~algorithm.
		The term ``\IRKA\#iter" denotes the iteration number of \IRKA~at the point of termination. }
	\end{flushleft}
\end{table}

\begin{figure}[htb]
	\caption{Behaviors of  different MOR for small problems
		\label{fig:MORs_of_small_problems}}
	\subfigure[$\textsc{eady}$]{
		\includegraphics[width=3.2in]{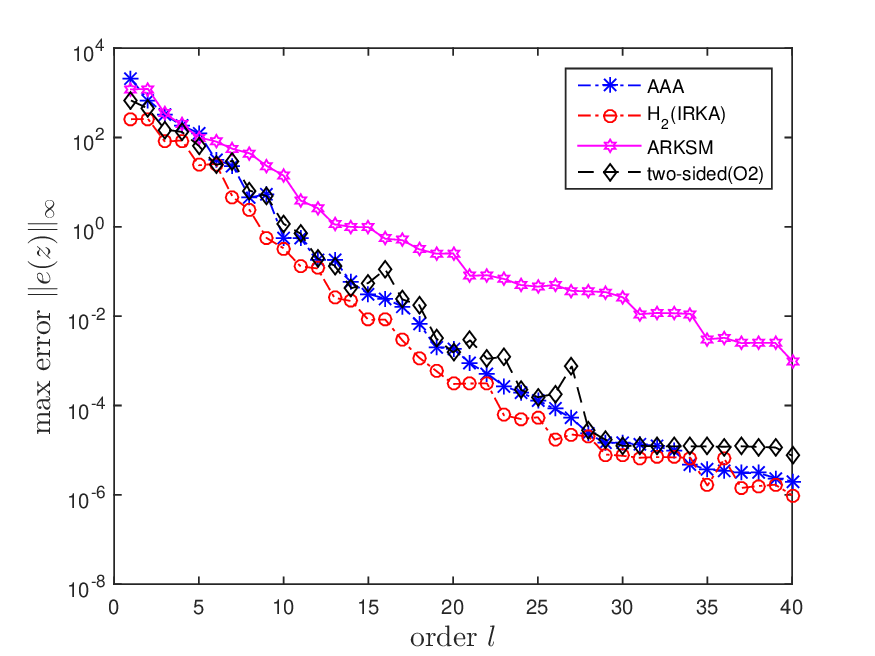}}
	\subfigure[ $\textsc{random}$]{
		\includegraphics[width=3.2in]{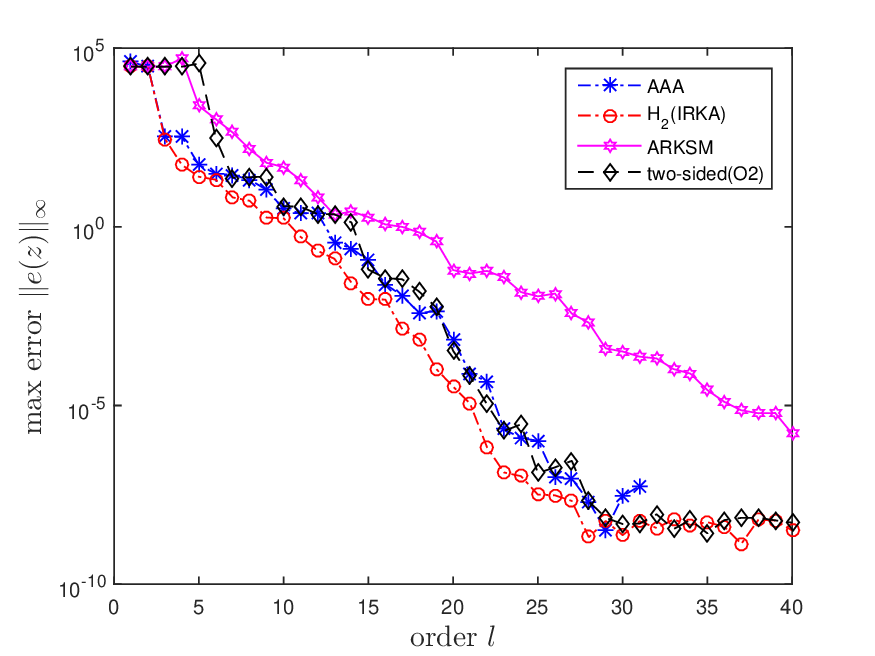}}\\
		\begin{center}
		Despite inputting $l=40$ into the \AAA algorithm, the process terminates at $l=31$ for \textsc{random}.
		\end{center}
	
\end{figure}

Two error images are presented in Fig.~\ref{fig:MORs_of_small_problems}, while the CPU times for 40-order MOR are listed in Table~\ref{tbl:small_problems_CPU}.
With the exception of the $H_2$ norm optimal MOR  (\IRKA), the remaining algorithms employ nested shifts.
Hence, their Matlab codes for producing Fig.~\ref{fig:MORs_of_small_problems} and  Table~\ref{tbl:small_problems_CPU} exhibit minimal discrepancies.
However, only the 40-order $H_2$ norm optimal MOR is computed in Table~\ref{tbl:small_problems_CPU}, while the $H_2$ norm optimal MOR of all orders ($l=1,2,\ldots,40$) is computed in Figure~\ref{fig:MORs_of_small_problems}.
 


\textbf{Example 2 (large-scale problems):}
With $E=I$, the problems \textsc{L10000} and  \textsc{L10648} respectively originate from 
papers  \cite{MR2858340} and \cite{MR2318706}.
They are derived  from the explicit discretisation  of partial differential equations.
The remaining problems originate from the Oberwolfach collection~\cite{10.1007/3-540-27909-1_11}.
A summary of the remaining information can be found in the first section of Table~\ref{tbl:large_problems_CPU}.

We set $\mathbb{Z}_A=\mathbb{Z}_e=\mathbb{Z}(\alpha,\beta,400)$ and 
$\mathbb{Z}_2=\mathbb{Z}(\alpha,\beta,500)$.
Two error images are presented in Fig.~\ref{fig:MORs_of_large_problems}, while the CPU times for 40 order MOR are listed in Table~ \ref{tbl:large_problems_CPU}.
%
%
%
The remaining settings are identical to those previously described for small problems.

\begin{figure}[htb]
  \caption{Behaviors of different MOR for large-scale problems
	\label{fig:MORs_of_large_problems}}
\centering
      \subfigure[$\textsc{flow\_v0}$]{
    \includegraphics[width=3.2in]{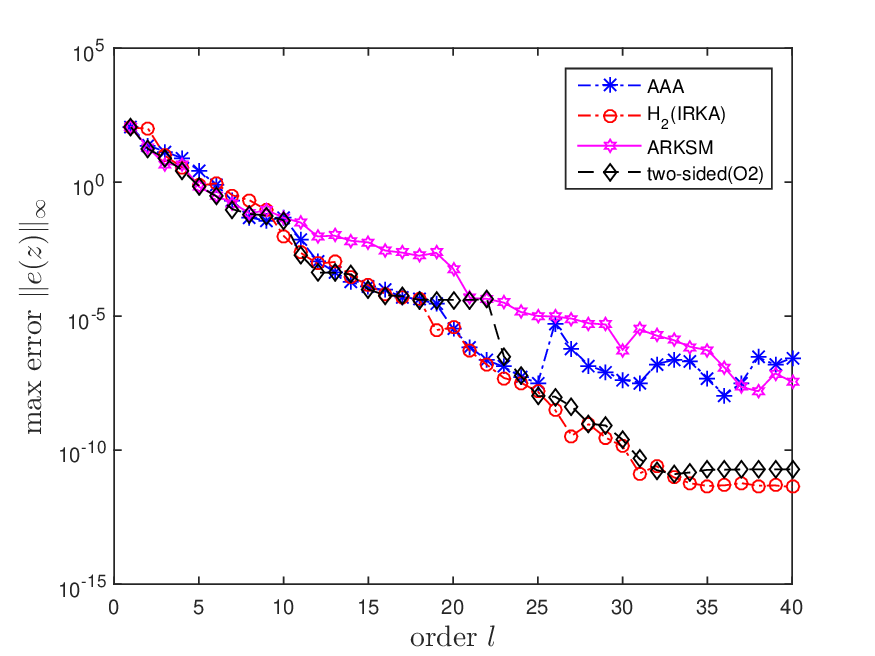} }
           \subfigure[$\textsc{rail20209}$]{
    \includegraphics[width=3.2in]{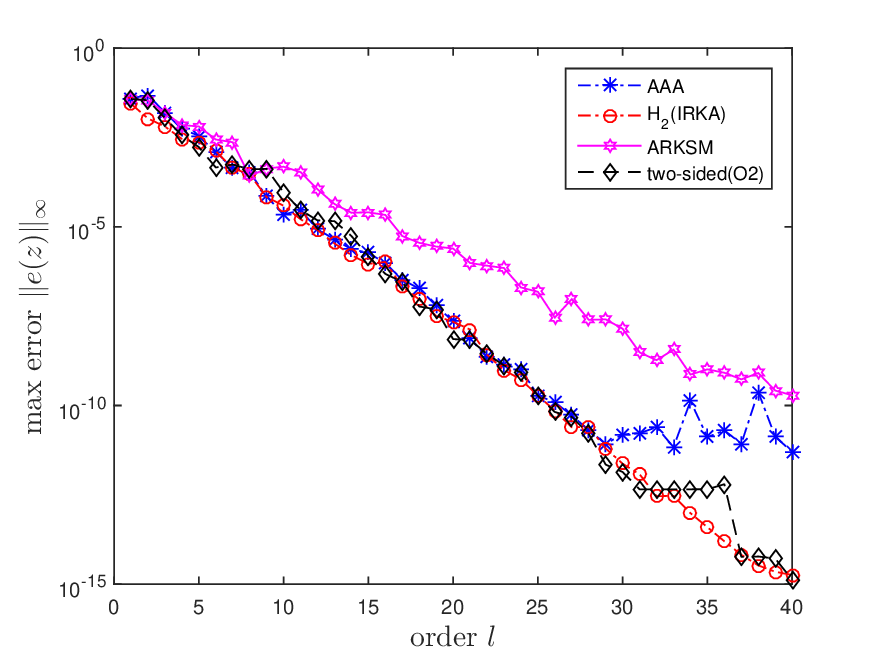} }\\

\end{figure}

\begin{table}[]
	\centering
	\caption{{\rm   40-order  MOR of large scale  problems} \label{tbl:large_problems_CPU}}
	\begin{tabular}{|c|ccccccc|}
		\hline
		&	Matrix		&	\textsc{L10000}	&	\textsc{L10648}	&	\textsc{flow\_v0}	&	\textsc{flow\_v0.5}		&	\textsc{rail5177}	&	\textsc{rail20209}	\\
		\hline
		\multirow{5}{*}{Info.}&	Size	&${\rm	10000	}$&${\rm	10648	}$&${\rm	9669	}$&${\rm	9669	}$&${\rm	5177	}$&${\rm	20209	}$\\
		&	symmetry	&${\rm	No	}$&${\rm	No	}$&${\rm	Yes	}$&${\rm	No	}$&${\rm	Yes	}$&${\rm	Yes	}$\\
		&	$b$	&${\rm	\texttt{ones(n,1)}	}$&${\rm	\texttt{ones(n,1)}	}$&${\rm	\texttt{B(:,1)}	}$&${\rm	\texttt{B(:,1)}	}$&${\rm	\texttt{B(:,6)}	}$&${\rm	\texttt{B(:,6)}	}$\\
		&	$c$	&${\rm	\texttt{randn(n,1)}	}$&${\rm	\texttt{randn(n,1)}	}$&${\rm	\texttt{C(4,:)'}	}$&${\rm	\texttt{C(1,:)'}	}$&${\rm	\texttt{C(2,:)'}	}$&${\rm	\texttt{C(2,:)'}	}$\\
		&	$\alpha,\beta$	&${\rm	-2,5	}$&${\rm	-3,4	}$&${\rm	-1,8	}$&${\rm	-3,5	}$&${\rm	-6,3	}$&${\rm	-6,3	}$\\
		
		\hline
		
		\multirow{6}{*}{CPU(s)}
		&	Sampling	&${\rm	57.94 	}$&${\rm	323.14 	}$&${\rm	70.04 	}$&${\rm	68.99 	}$&${\rm	24.10 	}$&${\rm	113.42 	}$\\
		
		&	\AAA	&${\rm	58.24 	}$&${\rm	323.47 	}$&${\rm	70.37 	}$&${\rm	69.34 	}$&${\rm	24.48 	}$&${\rm	113.81 	}$\\
		
		&	\ARKSM	&${\rm	6.21 	}$&${\rm	20.25 	}$&${\rm	4.49 	}$&${\rm	6.15 	}$&${\rm	2.71 	}$&${\rm	6.81 	}$\\
		&	\twosided(O1)	&${\rm	9.81 	}$&${\rm	41.37 	}$&${\rm	11.68 	}$&${\rm	15.30 	}$&${\rm	5.31 	}$&${\rm	19.04 	}$\\
		&	\twosided(O2)	&${\rm	12.16 	}$&${\rm	42.38 	}$&${\rm	13.31 	}$&${\rm	29.22 	}$&${\rm	7.08 	}$&${\rm	21.18 	}$\\
		&	$H_2$ (\IRKA)	&${\rm	120.56 	}$&${\rm	1073.56 	}$&${\rm	671.25 	}$&${\rm	682.88 	}$&${\rm	254.41 	}$&${\rm	1106.58 	}$\\

		\hline

		\multirow{5}{*}{$\|e(\mathfrak{s})\|_{\infty}$ }
		&	\AAA	&${\rm	1.6E-06	}$&${\rm	2.2E-06	}$&${\rm	8.3E-08	}$&${\rm	7.9E-09	}$&${\rm	2.5E-12	}$&${\rm	6.5E-11	}$\\
		&	\ARKSM	&${\rm	2.2E-02	}$&${\rm	8.9E-04	}$&${\rm	3.6E-08	}$&${\rm	1.5E-05	}$&${\rm	4.6E-10	}$&${\rm	1.9E-10	}$\\
		&	\twosided(O1)	&${\rm	8.0E-06	}$&${\rm	3.8E-06	}$&${\rm	1.3E-08	}$&${\rm	5.9E-12	}$&${\rm	6.8E-16	}$&${\rm	1.3E-15	}$\\
		&	\twosided(O2)	&${\rm	4.3E-06	}$&${\rm	1.6E-05	}$&${\rm	1.9E-11	}$&${\rm	2.5E-12	}$&${\rm	6.4E-16	}$&${\rm	1.3E-15	}$\\
		&	$H_2$ (\IRKA)	&${\rm	1.5E-07	}$&${\rm	9.7E-07	}$&${\rm	4.2E-12	}$&${\rm	3.0E-10	}$&${\rm	8.6E-16	}$&${\rm	1.8E-15	}$\\
		
		\hline
		&	\IRKA\#iter	&${\rm	21	}$&${\rm	32	}$&${\rm	100	}$&${\rm	100	}$&${\rm	100	}$&${\rm	100	}$\\

		\hline
		
		
	\end{tabular}
	
	\begin{flushleft}
		\small{ The vectors $b$ and $c$ of  the \textsc{rail20209} model are presented  in \cite[Section 5.3]{MR2421462}.
			In accordance with \eqref{eqn:definition_of_s_min_s_max},
			$10^\alpha$ identified as a lower bound of  $|s_{\min}|$, and thus, $10^\beta$ is established as an upper bound of $|s_{\max}|$.
	     	The ``sampling'' stage concerns the CPU times associated with the evaluation of $h(\mathfrak{s})$ for  $\mathfrak{s} \in \mathbb{Z}_A=\mathbb{Z}_e=\mathbb{Z}(\alpha,\beta,400).$
	     	We set $\mathbb{Z}_2=\mathbb{Z}(\alpha,\beta,500)$ for the \twosided~algorithm.
	     	The remaining settings are identical to those previously described in Table~\ref{tbl:small_problems_CPU}.
		 }
	\end{flushleft}
\end{table}

The following observations are presented for consideration.
\begin{enumerate}
	\item 
	The data presented in Tables \ref{tbl:small_problems_CPU} and \ref{tbl:large_problems_CPU} clearly demonstrate that the CPU times associated with the sampling process constitute a significant proportion of the overall processing time for the \AAA~algorithm.
	As the sampling process is not a necessary step in the execution of other algorithms, the CPU times  involved in sampling are not incorporated into their respective computations.
	Nevertheless, we continue to perform sampling for computing the max error 
	\eqref{eqn:compute_error_ez_norm}, 
	so that all of the algorithms can be compared according to the same criteria.
%
	\item 
	As illustrated in Fig.\ref{fig:MORs_of_large_problems}, the error images produced by the \twosided(O2)~algorithm are comparable to those generated by the $H_2$ norm MOR and \AAA.
   Given that the $H_2$ norm MOR exhibits $H_2$ norm optimality and that the effectiveness of \AAA~is already demonstrated in numerous examples \cite{MR3805855}, it can be stated that our greedy algorithms developed for solving the $H_{\infty}$ norm MOR demonstrate favourable behavior.
	Meanwhile, our \twosided(O2)~algorithm  requires a much shorter processing time than \IRKA~and \AAA~ in dealing with large-scale problems.

	\item Both the $H_2$ norm optimal MOR (\IRKA) and our greedy \twosided~algorithm are two-sided type projection methods.
	A total of 80 shifts have been allocated for a 40-order MOR.
	In comparison, the \ARKSM~algorithm is, in fact, a one-sided type projection method.
	For a 40-order MOR, only 40 shifts are permitted in \ARKSM.
%
	In Section~\ref{sect:one_sided_error_formula_section}, it is revealed that the forms of the errors associated with the $l$-order two-sided method and the $2l$-order one-sided method are strikingly analogous.
	It is thus proposed that the 20-order two-sided algorithm will demonstrate a similar level of precision to that exhibited by the 40-order one-sided algorithm.
		The error pictures in Fig.\ref{fig:MORs_of_small_problems} and Fig.\ref{fig:MORs_of_large_problems} illustrate this phenomenon to some extent, as evidenced by the  \textsc{eady}, \textsc{random} and \textsc{rail20209} examples.
Concretely speaking, the max error of $l=40$ in \ARKSM ~is approximately equivalent to that of $l=20$ in $H_2(\IRKA)$ or the \twosided~algorithm.
\end{enumerate}

\section{Conclusion}\label{sect:conclusion_section}
The following  is a summary of the contributions.
\begin{enumerate}
	\item  We derive an explicit error formula for MOR, when MOR is accomplished  by rational Krylov subspace methods.
	The moments matching results of MOR are particular cases of our error formula.
	\item  In the field of numerical analysis, the error formula is a valuable tool for theoretical analysis.
	We obtain  two explanations of $e(\mathfrak{s})$, regarding the variables $\mathfrak{s}$ and $\lambda$ in the resolvent function $\mathcal {H}(\lambda,\mathfrak{s})=1/(\mathfrak{s}-\lambda)$. With respect to  $\mathfrak{s}$, we observe that  $h(\mathfrak{s})$ represents  the  interpolated function,  $\tilde{h}(\mathfrak{s})$ represents the rational  interpolating function, and the model error $e(\mathfrak{s})$  is given by the Hermitian interpolation remainder.
	\item  In the situation  regarding  $\lambda$, the function $\widehat{\mathcal{H}}(\lambda)=\varphi(\lambda)\psi(\lambda)/(\mathfrak{s}-\lambda)$ has a non-relevant parameter $\mathfrak{s}$. The integral is now expressed by the linear functional with a weighted term \eqref{eqn:int_definition_weighted}. We demonstrate that 
	$h(\mathfrak{s})$ represents   the integral of $\widehat{\mathcal{H}}(\lambda)$,
   that $\tilde{h}(\mathfrak{s})$ represents the Gauss-Christoffel quadrature formula, and that  the model error $e(\mathfrak{s})$ is, in fact,  the  quadrature remainder.
     \item A thorough examination of the form of the errors reveals a similarity between the errors in $2l$-order one-sided projection methods and those in $l$-order two-sided projection methods. The numerical experiments also demonstrate this similarity. 
   
   \item   We obtain an  affortable approximation of $e(\mathfrak{s})$, which can be computed  in the reduced problem.
   By using  the approximation, we propose a greedy two-sided projection method for the interpolatory $H_{\infty}$ norm MOR.
   The efficiency of the algorithm is evidenced by numerical experiments.
\end{enumerate}
We explain two documented  phenomena.
1. The error formula  is independent of the stability of  the system. 
Consequently, the projection methods are incapable of preserving the stability unless additional constraints are introduced.
2. The final error  formula is independent of the bases  in rational Krylov subspaces.
For numerical stability, the researchers have elected to utilize the orthonormal  bases in both   left and right subspaces.

%
	
%
A challenge arises in attempting to generalize the error formula to a multi-input-multi-output system.
Once the concise and explicit expressions of the residuals have  been obtained,  the error formula of
the full interpolation \cite[Section 3.3]{doi:10.1137/1.9781611976083}
can be obtained by $e(\mathfrak{s})=R_C^H(\mathfrak{s}I-A)^{-1}R_B$.
While \cite[Theorem 4.3]{doi:10.1137/19M1255847} describes certain properties of the residuals,
there is still a need for a more concise expression.
%
%
 Another technical challenge is the utilization of tangential interpolation.
  Once the left and right tangential directions have been fixed by setting $c=c_i$ and $b=b_i$ (cf. \cite[Theorem 3.3.1, Theorem 3.3.2]{doi:10.1137/1.9781611976083}), the error formula can be readily rewritten, involving only $A$, $Bb$ and $Cc$.
 The issue of how to express the error formula concisely when considering different left and right tangential directions remains unresolved.

\section*{Acknowledge}
The author deeply appreciates Valeria Simoncini, Ren-cang Li and Shengxin Zhu for their insightful comments.



\bibliography{math20241212}


\end{document}